\newtheorem{lemma}{Lemma}
\newtheorem{theorem}{Theorem}
\def\hm#1{#1\nobreak\discretionary{}{\hbox{\ensuremath{#1}}}{}}
\def\N{{\mathbb N}}
\def\CC{{\mathbb C}}
\def\B{{\mathcal B}}
\def\E{{\mathcal E}}
\def\taur{\tau^{(r)}}
\def\sigmar{\sigma^{(r)}}
\def\taues{\tau^{(e)s}}
\def\sigmaes{\sigma^{(e)s}}
\def\eps{\varepsilon}
\def\ups{\upsilon}
\def\res{\mathop{\mathrm{res}}}
\def\llog{\mathop{\mathrm{llog}}}
\def\lllog{\mathop{\mathrm{lllog}}}
\def\suma{\mathop{\sum\nolimits^*}}
\def\le{\leqslant}
\def\ge{\geqslant}
\def\proof{\par {\em Proof.}\hspace{1em}}
\def\endproof{{\hfil$\blacksquare$\parfillskip0pt\par\medskip}}
\begin{document}

\title{Functions concerned with divisors of order $r$}
\author{Andrew V. Lelechenko}
\address{I.~I.~Mechnikov Odessa National University}
\email{1@dxdy.ru}

\keywords{Divisors of order $r$, exponential semiproper divisors, unitary divisors, generalized divisors, average order, corollaries of Riemann hypothesis}
\subjclass[2010]{
11A25, % Arithmetic functions; related numbers; inversion formulas
11N37% Asymptotic results on arithmetic functions
}

\begin{abstract}
N. Minculete has introduced a concept of divisors of order $r$: integer $d=p_1^{b_1}\cdots p_k^{b_k} $ is called a divisor of order $r$ of~$n=p_1^{a_1}\cdots p_k^{a_k}$ if $d \mid n$  and~$b_j\in\{r, a_j\}$ for~$j=1,\ldots,k$. One can consider respective divisor function~$\taur$  and sum-of-divisors function $\sigmar$.

In the present paper we investigate the asymptotic behaviour of $$\sum_{n\le x} \taur(n) \text{ and } \sum_{n\le x}\sigmar(n)$$
and improve several results of \cite{minculete2012th} and \cite{minculete2012b}. We also provide conditional estimates under Riemann hypothesis.
\end{abstract}

\maketitle

\section{Introduction}

Recently N. Minculete in his PhD Thesis \cite{minculete2012th}, devoted to the functions using exponential divisors, and in further paper \cite{minculete2012b} introduced a concept of {\em divisors of order $r$:} integer $d=p_1^{b_1}\cdots p_k^{b_k} $ is called a divisor of order $r$ of number $n=p_1^{a_1}\cdots p_k^{a_k}$ if $d$ divides $n$ in the usual sense and~$b_j\in\{r, a_j\}$ for~$j=1,\ldots,k$. We also suppose that~$1$ is a divisor of any order of itself (but not of any other number). Let us denote respective divisor and sum-of-divisor functions as $\taur$ and $\sigmar$. These functions are multiplicative and
\begin{align}
\label{eq:taur-def}
\taur(p^a) &= \begin{cases}
               1, & a\le r, \\
               2, & a >  r.
               \end{cases}
\\
\label{eq:sigmar-def}
\sigmar(p^a) &= \begin{cases}
               p^a, & a\le r, \\
               p^a+p^r, & a >  r.
               \end{cases}
\end{align}

In a special case of $r=0$ we get well-studied {\em unitary divisors}. For example,
it was proved in \cite{gioia1966} that
\begin{equation}\label{eq:gioia1966}
\sum_{n\le x} \tau^{(0)}(n) = {x\over \zeta(2)} \left( \log x + 2\gamma-1 - {2\zeta'(2)\over\zeta(2)} \right) + O(x^{1/2}).
\end{equation}
(under Riemann hypothesis error term is $O(x^{221/608+\eps})$ due to~\cite{kaczorowski2009})
and in \cite{sitaramachandrarao1973} it was proved that
\begin{equation}\label{eq:sitaramachandrarao1973}
\sum_{n\le x} \sigma^{(0)}(n) = {\pi^2 x^2 \over 12 \zeta(3)} + O(x \log^{5/3} x).
\end{equation}

In another special case of $r=1$ we get so-called by Minculete {\em exponential semiproper divisors} and denote  $\taues:=\tau^{(1)}$, $\sigmaes:=\sigma^{(1)}$. An integer $d$ is an exponential semiproper divisor of $n$ if~$\ker d = \ker n$ and~$(d/\ker n, n/d) = 1$, where~$\ker n = \prod_{p \mid n} p$.

Minculete proved in \cite[(3.1.17--19)]{minculete2012th} that
\begin{gather}
\label{eq:sup-taur}
\limsup_{n\to \infty} {\log \taur(n) \log\log n \over \log n } = {\log 2 \over r+1},
\\
\label{eq:average-minc}
\sum_{n\le x} \taur(n) = {\zeta(r+1) \over \zeta(2r+2)} x + Ax^{1/(r+1)} + O(x^{1/(r+2) + \eps}),
\\
\label{eq:sup-sigmar}
\limsup_{n\to \infty} {\sigmar(n) \over n \log\log n } = {6 e^\gamma \over \pi^2}.
\end{gather}

In the present paper we improve the error term in \eqref{eq:average-minc} and establish asymptotic formulas for~$\sum_{n\le x} \sigmar(n)$ with $O$- and $\Omega$-estimates of the error term.

\section{Notation}

In asymptotic relations we use $\sim$, $\asymp$, Landau symbols $O$ and $o$, big omegas $\Omega$ and $\Omega_\pm$, Vinogradov symbols $\ll$ and $\gg$ in their usual meanings. All asymptotic relations are given as an argument tends to the infinity.

Letter $p$ with or without indexes denote rational prime.

As usual $\zeta(s)$ is the Riemann zeta-function. For complex $s$ we denote~$\sigma:=\Re s$ and~$t:=\Im s$.

We use abbreviations $\llog x := \log\log x$, $\lllog x := \log\log\log x$.

Letter $\gamma$ denotes Euler–Mascheroni constant, $\gamma \approx 0.577$.

Everywhere $\eps>0$ is an arbitrarily small number (not always the same even in one equation).

We write $f\star g$ for Dirichlet convolution:
$ (f \star g)(n) = \sum_{d \mid n} f(d) g(n/d) $.

Function $\ker\colon \N\to\N$ stands for
$\ker n = \prod_{p \mid n} p$.

For a set $A$ notation $\#A$ means the cardinality of $A$.

\section{Preliminary estimates}

Consider
$$ \tau(a,b;n) = \sum_{k^a l^b=n} 1, \qquad T(a,b;x) = \sum_{n\le x} \tau(a,b;n), \qquad 1\le a\le b. $$
One can directly check that
\begin{equation*}\label{tauab-series}
\sum_{n=1}^\infty {\tau(a,b;n) \over n^s} = \zeta(as) \zeta(bs), \qquad \sigma > 1
\end{equation*}

\begin{lemma}
\begin{equation*}
T(a,b;x) = H(a,b;x) + \Delta(a,b;x)
\end{equation*}
where
$$ H(a,b;x) = \begin{cases}
              \zeta(b/a) x^{1/a} + \zeta(a/b) x^{1/b}, & 1\le a < b, \\
              x^{1/a} \log x + (2\gamma-1) x^{1/a}, & a=b,
              \end{cases}
$$
and
\begin{equation*}\label{eq:delta-simple-estimates}
x^{1/2(a+b)} \ll  \Delta(a,b;x) \ll \begin{cases}
                     x^{1/(2a+b)} & 1\le a < b, \\
                     x^{1/3a} \log x & a=b.
                     \end{cases}
\end{equation*}
\end{lemma}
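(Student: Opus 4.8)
The plan is to treat the two cases separately, since $a=b$ collapses to the classical Dirichlet divisor problem. When $a=b$ one has $k^al^a=(kl)^a$, so $\tau(a,a;n)$ is supported on perfect $a$-th powers and $T(a,a;x)=\sum_{m\le x^{1/a}}\tau(m)=D(x^{1/a})$, where $D$ is the summatory function of the ordinary divisor function. Substituting $y=x^{1/a}$ into Voronoi's estimate $D(y)=y\log y+(2\gamma-1)y+O(y^{1/3}\log y)$ yields the main term $H(a,a;x)$ together with the upper bound $\Delta(a,a;x)\ll x^{1/3a}\log x$, while Hardy's $\Omega$-theorem $D(y)-y\log y-(2\gamma-1)y=\Omega(y^{1/4})$ gives the matching lower bound $\Delta(a,a;x)\gg x^{1/4a}=x^{1/2(a+b)}$ (in the $\Omega$ sense).

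For $1\le a<b$ I would start from the single sum over the short variable,
$$T(a,b;x)=\sum_{l\le x^{1/b}}\left\lfloor(x/l^b)^{1/a}\right\rfloor=x^{1/a}\sum_{l\le x^{1/b}}l^{-b/a}-\tfrac12\lfloor x^{1/b}\rfloor-\sum_{l\le x^{1/b}}\psi\!\left((x/l^b)^{1/a}\right),$$
with $\psi(t)=\{t\}-\tfrac12$. Euler–Maclaurin applied to the smooth sum $\sum l^{-b/a}$ (here $b/a>1$, so the series converges) produces the first main term $\zeta(b/a)x^{1/a}$ together with several terms of size $x^{1/b}$. The whole difficulty is therefore concentrated in the sawtooth sum $\Sigma=\sum_{l\le x^{1/b}}\psi((x/l^b)^{1/a})$, whose inner function $h(l)=x^{1/a}l^{-b/a}$ has $|h'(l)|\asymp 1$ precisely at $l\asymp x^{1/(a+b)}$, and I would split $\Sigma$ there.

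On the oscillatory range $l\le x^{1/(a+b)}$ I would estimate $\Sigma$ dyadically: since $h''(l)\asymp x^{1/a}l^{-b/a-2}$, the trivial bound $O(L)$ wins for $l\sim L\lesssim x^{1/(2a+b)}$ and van der Corput's estimate $\sum_{l\sim L}\psi(h(l))\ll L|h''|^{1/3}+|h''|^{-1/2}$ wins beyond it; both the trivial range and the crossover term contribute $\ll x^{1/(2a+b)}$, which is exactly the claimed exponent. On the slowly varying tail $x^{1/(a+b)}<l\le x^{1/b}$ I would compare the sum to the integral: the substitution $s=h(l)$ turns $\int\psi(h(t))\,dt$ into $\tfrac ab x^{1/b}\int_1^{x^{1/(a+b)}}\psi(s)s^{-a/b-1}\,ds$, and letting the upper limit tend to infinity (the remaining tail integral is $\ll x^{-1/b}$ after integrating $\psi$ by parts, hence negligible against the $x^{1/b}$ factor) recovers, through the integral representation of $\zeta(a/b)$, exactly the second main term after combining with the leftover $x^{1/b}$-terms of Euler–Maclaurin.

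The main obstacle is controlling the sum-minus-integral discrepancy on this flat tail: the crude total-variation bound only gives $\ll x^{1/(a+b)}$, which is too weak, and the naive hyperbola method fails for precisely the same reason, since its cross term $\lfloor l_0\rfloor^2$ at the balance point $l_0\asymp x^{1/(a+b)}$ costs $O(x^{1/(a+b)})$. To reach $x^{1/(2a+b)}$ one must extract genuine cancellation, e.g. by expanding $\psi$ in its truncated Fourier series and applying Poisson summation (van der Corput's B-process) frequency by frequency, which yields a square-root type saving compatible with $x^{1/(2a+b)}$. Finally, for the lower bound I would invoke the standard $\Omega$-machinery: the Voronoi-type expansion of $\Delta(a,b;x)$ as a sum of $\cos$-terms produces the mean-square asymptotic $\int_1^X\Delta(a,b;x)^2\,dx\asymp X^{1+1/(a+b)}$, whence $\Delta(a,b;x)=\Omega(x^{1/2(a+b)})$.
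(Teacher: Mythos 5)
The paper offers no argument for this lemma at all --- its ``proof'' is a citation of Kr\"atzel (Th.~5.1, 5.3, 5.8) --- so your self-contained attempt is by construction a different route, and it must be judged on its own terms. The case $a=b$ is right in outline (it is exactly the reduction $T(a,a;x)=D(x^{1/a})$ plus Voronoi and Hardy), but carried out honestly it yields $a^{-1}x^{1/a}\log x+(2\gamma-1)x^{1/a}$, not the $x^{1/a}\log x$ displayed in the lemma; you assert agreement where there is in fact a mismatch by the factor $1/a$ in the logarithmic term (a discrepancy worth flagging rather than hiding). For $a<b$, your analysis of the oscillatory range $l\le x^{1/(a+b)}$ is sound: the trivial bound and the second-derivative bound cross exactly at $L=x^{1/(2a+b)}$, and both dyadic sums are geometric, so that range contributes $\ll x^{1/(2a+b)}$.

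The genuine gap is the flat tail $x^{1/(a+b)}<l\le x^{1/b}$, exactly where you admit ``one must extract genuine cancellation'': as written, your argument proves only the crude $O(x^{1/(a+b)})$ there, and the sentence claiming that Poisson summation gives ``a square-root type saving compatible with $x^{1/(2a+b)}$'' is the entire content of the theorem, asserted rather than proved. Worse, the one concrete claim you make about the standard alternative is false, and it is precisely this error that creates your difficulty. In the hyperbola identity with $t=s=x^{1/(a+b)}$ and $\lfloor y\rfloor=y-\frac12-\psi(y)$, the cross term $\lfloor t\rfloor\lfloor s\rfloor$ does \emph{not} cost $O(x^{1/(a+b)})$: the pieces $s\psi(t)$ and $t\psi(s)$ cancel identically against the Euler--Maclaurin boundary terms of the two smooth sums (because $x^{1/b}t^{-a/b}=s$ and $x^{1/a}s^{-b/a}=t$), and the smooth parts cancel as well since $\frac{b}{b-a}-\frac{a}{b-a}-1=0$. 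What survives is the exact identity
$$ \Delta(a,b;x)=-\sum_{k\le t}\psi\bigl((x/k^a)^{1/b}\bigr)-\sum_{l\le s}\psi\bigl((x/l^b)^{1/a}\bigr)+O(1), $$
with both $\zeta(b/a)x^{1/a}$ and $\zeta(a/b)x^{1/b}$ emerging from the analytically continued Euler--Maclaurin constants. This identity is your ``B-process performed exactly'': it trades the flat tail for a dual oscillatory sum over $k\le x^{1/(a+b)}$, which the same trivial-plus-van-der-Corput dyadic analysis bounds by $\ll x^{2/(3(a+b))}\le x^{1/(2a+b)}$. So the repair of your proof is to embrace the hyperbola method you dismissed (this is also how the cited proof in Kr\"atzel proceeds). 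The same criticism applies to the lower bound: the mean-square asymptotic $\int_1^X\Delta^2(a,b;x)\,dx\asymp X^{1+1/(a+b)}$ for general $a<b$ is itself a nontrivial theorem that you invoke without proof.
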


\proof
See \cite[Th. 5.1, Th. 5.3, Th. 5.8]{kratzel1988}.
\endproof

In fact $\Delta(a,b;x)$ can be estimated more precisely. For our goals we are primarily interested in the behaviour of $\Delta(1,b;x)$. Let us suppose that
\begin{equation}
\label{eq:Delta-theta-theta'}
\Delta(1,b;x) \ll x^{\theta_b} \log^{\theta'_b} x,
\end{equation}
then due to \cite[Th. 5.11]{kratzel1988} we can choose
$$ \theta_b = {1 \over b+7/2}, \qquad \theta'_b = 1, \qquad b\ge 7. $$
Estimates for $b \le 16$ are given in Table \ref{tbl:tauab}. Estimate for $b=1$ belongs to Huxley~\cite{huxley2005}, and estimate for $b=2$ belongs to Graham and Kolesnik~\cite{graham1988}. We have found no references on the best known results for $b\ge3$, so we calculated them with the use of \cite[Th. 5.11, Th. 5.12]{kratzel1988} selecting appropriate exponent pairs carefully. It seems that some of this estimates may be new.

\begin{table}[b]
\label{page:tauab}
\begin{tabular}{@{}r|r|c|c@{}}
$b$ & $\theta_b$ & $\theta'_b$ & Exponent pair or reference\\\hline

1
& $131/416+\eps                  \approx 0.314904$
& 0
& \cite{huxley2005} \\\hline

2
& $1057/4785+\eps                \approx 0.220899$
& 0
& \cite{graham1988} \\\hline

3
& $1486/8647+\eps                \approx 0.171852$
& 0
& $AB(A\B)^2H$ \\ \hline

4
& $1448/10331+\eps               \approx 0.140161$
& 0
& $AH$ \\ \hline

5
& $(15921-2 M)/30437 \approx 0.121398$
& 2
& $A^2 \B A (A \B A \B^2 A^2 \B^6)^\infty I$ \\ \hline

6
& $669/6305                      \approx 0.106106$
& 1
& $(A^2B)^3(AB)^3A^4BI$ \\ \hline

7
& $(9370-M)/34469 \approx 0.094491$
& 2
& $A^2 \B^2 (\B A \B^2 A^2 \B^6 A)^\infty I$ \\ \hline

8
& $(5+\sqrt{809}) / 392 \approx 0.085314$
& 1
& $A^2 \B^4 A (A \B A^2)^\infty I$ \\ \hline

9
& $(10551-M)/56976 \approx  0.077892$
& 2
& $A^2 \B^2 (A \B^6 A \B A \B^2 A)^\infty I$ \\ \hline

10
& $150509/2096993+\eps           \approx 0.071774$
& 0
& $(A^2\B^2)^3A{\B}H$ \\ \hline

11
& $1048/15811+\eps               \approx 0.066283$
& 0
& $A^2H$ \\ \hline

12
& $64/1037+\eps                  \approx 0.061716$
& 0
& $A^2H$ \\\hline

13
& $\vphantom{\biggr(} {\displaystyle 2516635/43324033+\eps \atop        \hfill\displaystyle\approx 0.058089}$
& 0
& $A^3BA^3BA^2BA^4B(A\B)^2H$ \\ \hline

14
& $75/1373  \approx 0.054625$
& 1
& $A^2(A\B)^2BA^3BI$ \\ \hline

15
& $\vphantom{\biggr(} {\displaystyle 13514730527/262064292044+\eps \atop        \hfill\displaystyle\approx 0.051570}$
& 0
& $A(A^2B)^3A^4\B^7A^3{\B}BA^4{\B}H$ \\ \hline

16
& $15/307   \approx 0.048860$
& 1
& $A^3BA^2BA^4BI$ \\

\end{tabular}
\smallskip
\caption{Values of $\theta_b$ and $\theta'_b$ in \eqref{eq:Delta-theta-theta'} for $b\le 16$. Exponent pairs are written in terms of  $A$- and $B$-processes \cite[Th.~2.12, 2.13]{kratzel1988}. We abbreviate~$\B := BA$. Here~$I=(0,1)$ and $H=(32/205\hm+\eps, 269/410+\eps)$ is Huxley exponent pair from~\cite{huxley2005}. Also $M\hm=\sqrt{37368753}$.}
\label{tbl:tauab}
\end{table}

\medskip

\begin{lemma}\label{l:petermann1997}
Let $\alpha$ and $\beta$ be positive real numbers with $\beta+1 \le \alpha $.
Then
$$ \sum_{mn^\alpha \le x} mn^\beta = {\zeta(2\alpha-\beta) \over 2} x^2 + \mathcal{D}(\alpha,\beta; x), \quad
\mathcal{D}(\alpha,\beta; x) \ll \begin{cases}
                        x\log^{2/3}x, & \beta+1 = \alpha, \\
                        x,            & \beta+1 < \alpha.
                        \end{cases}
$$
\end{lemma}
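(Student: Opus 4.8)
\medskip
\noindent\emph{Proof idea.}\quad The plan is to fix $n$ and carry out the inner summation over $m$. Since $mn^\alpha\le x$ forces $n\le x^{1/\alpha}$ and then $m\le x/n^\alpha$, I would insert the exact elementary identity $\sum_{m\le y}m=\tfrac12 y^2-y\,\psi(y)+O(1)$, where $\psi(y)=\{y\}-\tfrac12$ is the sawtooth function. With $y=x/n^\alpha$ this decomposes the double sum into a main piece $\tfrac12 x^2\sum_{n\le x^{1/\alpha}}n^{\beta-2\alpha}$, a sawtooth piece $-x\sum_{n\le x^{1/\alpha}}n^{\beta-\alpha}\psi(x/n^\alpha)$, and a harmless remainder $O\bigl(\sum_{n\le x^{1/\alpha}}n^{\beta}\bigr)$.

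For the main piece I would complete the Dirichlet series to infinity. The hypothesis $\beta+1\le\alpha$ gives $2\alpha-\beta\ge\alpha+1>1$, so $\sum_{n\ge1}n^{\beta-2\alpha}=\zeta(2\alpha-\beta)$ converges, and the tail beyond $x^{1/\alpha}$ is $O\bigl(x^{(\beta+1)/\alpha-2}\bigr)$; multiplied by $x^2$ it contributes $O\bigl(x^{(\beta+1)/\alpha}\bigr)=O(x)$, the last step because $(\beta+1)/\alpha\le1$. This yields precisely the main term $\tfrac12\zeta(2\alpha-\beta)x^2$. By the same exponent count the remainder is $O\bigl(x^{(\beta+1)/\alpha}\bigr)=O(x)$. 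Thus everything except the sawtooth piece already fits inside $\mathcal D(\alpha,\beta;x)\ll x$, and both cases of the lemma must come entirely from the sawtooth sum $S:=\sum_{n\le x^{1/\alpha}}n^{\beta-\alpha}\psi(x/n^\alpha)$.

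When $\beta+1<\alpha$ the exponent satisfies $\beta-\alpha<-1$, so bounding $|\psi|\le\tfrac12$ and using convergence of $\sum_n n^{\beta-\alpha}$ gives $S\ll1$ at once, contributing $O(x)$ as claimed. The whole difficulty is the boundary case $\beta+1=\alpha$, where $\beta-\alpha=-1$ and $S=\sum_{n\le x^{1/\alpha}}n^{-1}\psi(x/n^\alpha)$; here the trivial bound only gives $S\ll\log x$. To sharpen this I would split the range dyadically in $n$ and expand $\psi$ in its truncated Fourier series, reducing each block to exponential sums $\sum_{n\asymp M}e(hx/n^\alpha)$, where $e(u)=e^{2\pi i u}$, with phase $f(n)=hx/n^\alpha$ and $f''(n)\asymp hx/n^{\alpha+2}$.

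I expect the genuine obstacle to sit in the range of small $n$, say $n\le x^{1/(\alpha+2)}$, where $f''$ is large. For the complementary range $n\ge x^{1/(\alpha+2)}$ the second-derivative (van der Corput) test, summed over dyadic blocks, already contributes only $O(1)$; but the small-$n$ range consists of $\asymp\log x$ dyadic blocks, each of which the trivial bound handles with loss $O(1)$, producing exactly the unwanted factor $\log x$. Recovering the stated bound $\mathcal D(\alpha,\beta;x)\ll x\log^{2/3}x$ requires squeezing genuine cancellation out of these high-frequency blocks---optimising the Fourier truncation height against van der Corput (or higher-derivative) estimates so that the per-scale saving accumulates to the power $\tfrac23$ rather than $1$. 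This optimisation over the logarithmically many critical scales is the step I expect to be hardest and most delicate.
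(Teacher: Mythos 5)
Your reduction is correct as far as it goes: the decomposition via $\sum_{m\le y}m=\tfrac12y^2-y\psi(y)+O(1)$, the completion of the main term to $\tfrac12\zeta(2\alpha-\beta)x^2$ with tail $O(x^{(\beta+1)/\alpha})=O(x)$, and the disposal of the case $\beta+1<\alpha$ are all sound, and you have correctly isolated the entire difficulty in the sum $S=\sum_{n\le x^{1/\alpha}}n^{-1}\psi(x/n^\alpha)$ when $\beta+1=\alpha$. But the proof has a genuine gap precisely there, and you acknowledge it yourself: you never establish $S\ll\log^{2/3}x$, you only describe the bound as an optimisation you ``expect to be hardest.'' Worse, the tools you propose for closing it cannot work. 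In the small-$n$ regime the phase $hx/n^\alpha$ is astronomically large compared with the block length $M$, and every van der Corput derivative test (second or higher order) degenerates: as you computed implicitly, the second-derivative bound contributes $x^{1/3}M^{-1-\alpha/3}$ per block, which explodes as $M\to1$, and higher-derivative tests fail similarly. No amount of optimising the Fourier truncation height against van der Corput estimates extracts cancellation from blocks with, say, $M\le\exp(\log^{2/3}x)$.

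The missing idea is Vinogradov's method, in the form used by Walfisz for the error term in $\sum_{n\le x}\varphi(n)$ and generalized by P\'etermann: Vinogradov-type estimates give, for $\sum_{n\sim M}e(hx/n^\alpha)$, a saving roughly of size $\exp(-c\log^3M/\log^2x)$, which is nontrivial exactly when $M\ge\exp(C\log^{2/3}x)$. The dyadic blocks below that threshold --- there are $\asymp\log^{2/3}x$ of them --- are bounded trivially by $O(1)$ each, and this is the actual source of the exponent $2/3$; the remaining blocks are summed with the Vinogradov saving and contribute $O(1)$. This analytic core is the real content of the lemma, and it is why the paper does not prove it either: the paper's ``proof'' is a one-line citation of P\'etermann's Theorem 1, which in turn rests on Walfisz's machinery. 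So your attempt, while structurally faithful to how the known proof begins, is incomplete at the one step that cannot be done with elementary or first-generation exponential-sum techniques; to fix it you would either invoke P\'etermann/Walfisz directly (as the paper does) or import a Vinogradov--Korobov exponential sum bound.
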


\proof
See \cite[Th. 1]{petermann1997}.
\endproof

\medskip

For $k>0$ one can define a multiplicative function $\mu_k$  implicitly by
$$
\sum_{n=1}^\infty {\mu_k(n) \over n^s} = {1 \over \zeta(ks)} , \qquad \sigma > 1.
$$
So $\mu_k(n^k)=\mu(n)$ and $\mu_k(m)=0$ for all other arguments. Trivially~$\mu_1 \equiv \mu$. Then
\begin{equation*}\label{eq:sum-of-muk-asymptotic}
M_k(x) := \sum_{n\le x} \mu_k(n)
= \sum_{n\le x^{1/k}} \mu(n)
\ll x^{1/k} \exp\bigl(-CN(x)\bigr),
\end{equation*}
where $C>0$, $N(x) = \log^{3/5} x \llog^{-1/5} x $. See \cite[Th. 12.7]{ivic2003} for the proof of the last estimate. Assuming Riemann hypothesis (RH) we get much better result
$$ M_k(x) \ll x^{1/2k + \eps} \qquad \text{\cite[Th. 14.25 (C)]{titchmarsh1986}}.$$

\medskip

\begin{lemma}\label{l:kuhleitner1994}
Let $K\in\N$, $J\in \N \cup \{0\}$, $m_1\le\cdots\le m_K$, $n_1\le\cdots\le n_J$, where all~$m_k, n_j \in \N$, and suppose that
$$ \sum_{n=1}^\infty {a(n)\over n^s} =
{\zeta(m_1s)\cdots\zeta(m_Ks) \over \zeta(n_1s)\cdots\zeta(n_Js)}. $$
Let
$$ \alpha = {K-1 \over 2\sum_{k=1}^K m_k}. $$
If $1/\alpha < 2n_j $ for all $j=1,\ldots,J$ then for arbitrary $H(x)$ of the form
$$ H(x) = \sum_{i=1}^I x^{\beta_i} P_i(\log x), \quad \beta_i \in \CC, \quad \alpha < \Re \beta_i \le 1, \quad P_i \text{ are polynomials}, $$
we have
$$ \sum_{n\le x} a_n = H(x) + \Omega(x^\alpha). $$
\end{lemma}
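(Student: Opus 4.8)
The plan is to reduce the assertion to a lower bound for the mean square of the error term, and to extract that lower bound from an explicit Voronoï-type expansion. Write $A(x)=\sum_{n\le x}a(n)$; for $\sigma>1$ Abel summation gives $F(s)=s\int_1^\infty A(x)x^{-s-1}\,dx$, where $F(s)=\zeta(m_1s)\cdots\zeta(m_Ks)\big/\bigl(\zeta(n_1s)\cdots\zeta(n_Js)\bigr)$. The admissible $H(x)$ is precisely the sum of residues of $F(s)x^s/s$ at the singularities with real part in $(\alpha,1]$: the poles at $s=1/m_k$ (and at $s=0$) coming from the numerator produce exactly the allowed terms $x^{\beta_i}P_i(\log x)$. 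The hypothesis $1/\alpha<2n_j$ for all $j$ is what guarantees that the denominator factors $\zeta(n_js)^{-1}$ contribute nothing of order $\ge x^\alpha$, so that after subtracting $H$ the genuine error $\Delta(x):=A(x)-H(x)$ is governed by the numerator alone. It then suffices to prove
\begin{equation*}
\int_1^X \Delta(x)^2\,dx \gg X^{1+2\alpha},
\end{equation*}
since if $\Delta(x)=o(x^\alpha)$ the left-hand side would be $o(X^{1+2\alpha})$, a contradiction; hence $\Delta(x)=\Omega(x^\alpha)$.

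Next I would derive a truncated Voronoï-type formula for $\Delta(x)$. Applying the functional equation of $\zeta$ to each numerator factor $\zeta(m_ks)$ and inverting by a Mellin transform, one obtains a trigonometric series
\begin{equation*}
\Delta(x)=x^\alpha\sum_{n\le N} b(n)\,\cos\!\bigl(\omega_n\,x^{1/M}-\phi\bigr)+(\text{lower-order terms}),\qquad M=\sum_{k=1}^K m_k,
\end{equation*}
where $\omega_n\asymp n^{1/M}$ and $b(n)$ consists of the arithmetic coefficients of $F$ damped by a suitable negative power of $n$. The amplitude exponent $\alpha=(K-1)/(2M)$ is exactly the one dictated by the degree of the functional equation of $F$: the number $K-1$ of ``excess'' zeta-factors, normalised by twice the total numerator weight $M$. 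This is the generalisation of the expansion behind the lower bound $x^{1/(2(a+b))}\ll\Delta(a,b;x)$ recorded in the first Lemma, which is the case $K=2$, $J=0$, $\{m_1,m_2\}=\{a,b\}$.

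Finally I would insert this expansion into $\int_1^X\Delta(x)^2\,dx$ and evaluate by orthogonality. Squaring and integrating, the diagonal terms $n=n'$ give a contribution $\asymp X^{1+2\alpha}\sum_{n\le N}b(n)^2$, which is $\gg X^{1+2\alpha}$ already from the first few terms; the off-diagonal terms $n\ne n'$ carry frequencies $\omega_n-\omega_{n'}\gg n^{1/M-1}$ that are well separated, so a first-derivative (stationary-phase) estimate renders their total contribution $o(X^{1+2\alpha})$. This establishes the required mean-square lower bound and hence the $\Omega$-result.

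The main obstacle is the rigorous construction and convergence control of the Voronoï expansion for a general \emph{ratio} of zeta-functions: the denominator factors must be expanded via $\zeta(n_js)^{-1}=\sum_m \mu(m)m^{-n_js}$ and shown to enter only through the coefficients $b(n)$ and through lower-order terms, never producing a term of order $x^\alpha$ — this is exactly the point at which the assumption $1/\alpha<2n_j$ is used. A secondary difficulty is to verify that the leading diagonal does not cancel, i.e.\ that $\sum_n b(n)^2$ has a genuine positive contribution; this follows because the $b(n)$ are essentially the multiplicative coefficients of $F$, which are non-negative in mean, so no systematic cancellation occurs.
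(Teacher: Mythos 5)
The paper does not prove this lemma at all: its entire ``proof'' is the one-line citation ``This is a simplified version of \cite[Th.~2]{kuhleitner1994}''. So your attempt is not competing with an argument in the paper but with a published theorem, and judged on that basis it is a program rather than a proof, with the genuinely hard steps left open. Your reduction to the mean-square bound $\int_1^X \Delta(x)^2\,dx \gg X^{1+2\alpha}$ is sound, and for $J=0$ (no denominator factors) the route you describe is the classical one (Chandrasekharan--Narasimhan, Tong, Hafner). But for $J\ge 1$ the key object of your plan --- an expansion $\Delta(x)= x^\alpha\sum_{n\le N}b(n)\cos(\omega_n x^{1/M}-\phi)+\cdots$ with $\omega_n\asymp n^{1/M}$ --- is not available as stated: the ratio $F(s)$ satisfies no functional equation, so one cannot ``apply the functional equation to each numerator factor'' and invert. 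The honest route is to write $a=b\star(\mu_{n_1}\star\cdots\star\mu_{n_J})$, expand the error of the pure-product part at $x/d$ for each $d$ in the support of the $\mu$-part, and superpose; the frequencies are then $(n/d)^{1/M}x^{1/M}$, which collide exactly when $n/d=n'/d'$, so the ``diagonal'' of your mean square is indexed by these rationals and each diagonal coefficient is a \emph{signed} series over the colliding pairs. Proving that this superposition does not cancel below the $x^\alpha$ level is the crux, and it is exactly where the hypothesis $1/\alpha<2n_j$ must do quantitative work; your proposal names this obstacle and then dismisses it with the claim that the coefficients are ``non-negative in mean'', which is unfounded --- after the $\mu$-twist the relevant coefficients are signed, and the lemma assumes no positivity of $a(n)$ whatsoever.

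There is a second, independent gap. The lemma asserts $\sum_{n\le x}a(n)=H(x)+\Omega(x^\alpha)$ for an \emph{arbitrary} $H$ of the stated form, not just for the natural residue sum of $F(s)x^s/s$; your opening step identifies the admissible $H$ with that residue sum, which changes the statement. This part is repairable: if $H$ differs from the residue main term $H_0$, then $H_0-H$ is a nonzero generalized power sum with all exponents of real part exceeding $\alpha$, hence its $L^2(1,X)$-norm has strictly larger order than $X^{1/2+\alpha}$, and the $L^2$ triangle inequality recovers the claim from the mean-square bound relative to $H_0$ --- but this argument is absent, and the universality over $H$ is precisely what makes the lemma quotable in the paper for \eqref{eq:sum-of-taur-asymptotic-Omega} independently of which main terms are subtracted. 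A smaller point: consecutive frequencies $\omega_n\asymp n^{1/M}$ have gaps $\asymp n^{1/M-1}\to 0$, so they are not ``well separated''; the off-diagonal estimate needs coefficient decay together with a truncation $N=N(X)$ chosen against $X$, which is standard (Tong's method) but not automatic.
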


\proof
This is a simplified version of \cite[Th. 2]{kuhleitner1994}.
\endproof

\section{Asymptotic properties of \texorpdfstring{$\sum \taur(n)$}{Σ τr(n)}}

\begin{lemma}
Let $F_r(s)$ be Dirichlet series for $\taur$:
$$ F_r(s) := \sum_{n=1}^\infty {\taur(n)\over n^s}. $$
Then
\begin{equation}\label{eq:dirichlet-taur}
F_r(s) = {\zeta(s) \zeta\bigl((r+1)s\bigr) \over \zeta\bigl((2r+2)s\bigr)}, \qquad \sigma > 1.
\end{equation}
\end{lemma}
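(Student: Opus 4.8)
The plan is to use multiplicativity of $\taur$ to express $F_r(s)$ as an Euler product, evaluate the resulting local factor in closed form, and then recognize it as the Euler factor of the claimed right-hand side. Since $\taur(p^a)\le 2$ for every $a$ by \eqref{eq:taur-def}, the coefficients are bounded by those of an absolutely convergent series for $\sigma>1$; this legitimizes both the factorization into an Euler product and every rearrangement below.

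First I would write, with $z=p^{-s}$ and using \eqref{eq:taur-def},
$$ F_r(s) = \prod_p \sum_{a=0}^\infty \frac{\taur(p^a)}{p^{as}} = \prod_p \left( \sum_{a=0}^{r} z^a + 2\sum_{a=r+1}^\infty z^a \right). $$
Summing the two geometric series, the local factor at $p$ becomes
$$ \frac{1-z^{r+1}}{1-z} + \frac{2z^{r+1}}{1-z} = \frac{1+z^{r+1}}{1-z}. $$

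Next I would compute the Euler factor of the proposed right-hand side. Expanding each zeta value as an Euler product, the local factor at $p$ equals
$$ \frac{1}{1-z}\cdot\frac{1}{1-z^{r+1}}\cdot\bigl(1-z^{2r+2}\bigr). $$
The only substantive step is the factorization $1-z^{2r+2}=(1-z^{r+1})(1+z^{r+1})$, after which the factor $(1-z^{r+1})$ cancels and this collapses to $(1+z^{r+1})/(1-z)$, matching the local factor of $F_r(s)$. As the two Dirichlet series share every Euler factor and both converge absolutely for $\sigma>1$, they agree there, which is \eqref{eq:dirichlet-taur}.

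I do not expect a genuine obstacle: the argument reduces to a geometric-series summation together with the factorization of $1-z^{2r+2}$, and the convergence bookkeeping is immediate from the uniform bound $\taur(p^a)\le 2$. The only thing worth a moment's care is ensuring the $a\le r$ and $a>r$ ranges are split correctly in the local sum, since an off-by-one there would corrupt the exponent $r+1$.
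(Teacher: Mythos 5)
Your proof is correct and follows essentially the same route as the paper: both compute the local (Bell/Euler) factor of $F_r(s)$ as $(1+z^{r+1})/(1-z)$ with $z=p^{-s}$ and match it, via the factorization $1-z^{2r+2}=(1-z^{r+1})(1+z^{r+1})$, with the Euler factor of $\zeta(s)\zeta\bigl((r+1)s\bigr)/\zeta\bigl((2r+2)s\bigr)$. The only cosmetic difference is that you sum the two geometric series directly, while the paper first rewrites $\sum_{k=0}^{r}x^k+2\sum_{k>r}x^k$ as $\sum_{k=0}^{\infty}x^k+\sum_{k>r}x^k$ before factoring.
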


\proof
Let us transform Bell series for $\taur$:
\begin{multline*}
\taur_p(x) = \sum_{k=0}^\infty \taur(p^k) x^k
= \sum_{k=0}^r x^k + 2 \sum_{k>r} x^k
= \sum_{k=0}^\infty x^k + \sum_{k>r} x^k
=\\
= (1+x^{r+1}) \sum_{k=0}^\infty x^k
= {1+x^{r+1} \over 1-x}
= {1-x^{2r+2} \over (1-x)(1-x^{r+1})}.
\end{multline*}
The representation of $F_r$ in the form of an infinite product by~$p$ completes the proof:
$$
F_r(s) = \prod_p \taur_{p}(p^{-s}) = \prod_p {1-p^{-(2r+2)s} \over (1-p^{-s}) (1-p^{-(r+1)s})} = {\zeta(s) \zeta\bigl((r+1)s\bigr) \over \zeta\bigl((2r+2)s\bigr)}.
$$
\endproof

It follows from  \eqref{eq:dirichlet-taur} that
\begin{equation}\label{eq:taur-star-representation}
\taur = \tau(1, r+1; \cdot) \star \mu_{2r+2}
\end{equation}

\begin{theorem}\label{th:sum-of-taur-asymptotic}
If $\Delta$ is estimated as in \eqref{eq:Delta-theta-theta'} then for $r>0$
\begin{equation*}
\label{eq:sum-of-taur-asymptotic}
\sum_{n\le x} \taur(n) = Ax + Bx^{1/(r+1)} + \E_{r+1}(x),
\quad
\E_{r}(x) = O \left( x^{\max(\theta_{r}, 1/2r)} \log^{\theta'_{r}} x \right),
\end{equation*}
where constants $A$ and $B$ are specified below in \eqref{eq:sum-of-taur-asymptotic-with-const}.
\end{theorem}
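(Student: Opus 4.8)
The plan is to exploit the convolution identity \eqref{eq:taur-star-representation}, which writes $\taur = \tau(1,r+1;\cdot)\star\mu_{2r+2}$. Since $\mu_{2r+2}$ is supported only on perfect $(2r+2)$-th powers, with $\mu_{2r+2}(m^{2r+2})=\mu(m)$, the outer variable is extremely sparse and no symmetric hyperbola argument is needed. I would simply collapse the convolution into a one-sided sum
$$\sum_{n\le x}\taur(n)=\sum_{m\le x^{1/(2r+2)}}\mu(m)\,T\Bigl(1,r+1;\frac{x}{m^{2r+2}}\Bigr),$$
the range following from the fact that $T(1,r+1;y)$ vanishes for $y<1$.

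Next I would feed in the decomposition $T(1,r+1;y)=H(1,r+1;y)+\Delta(1,r+1;y)$ supplied by the first Lemma. As $r>0$ forces $1<r+1$, the main part is $H(1,r+1;y)=\zeta(r+1)\,y+\zeta\bigl(1/(r+1)\bigr)\,y^{1/(r+1)}$. After substituting $y=x/m^{2r+2}$, the two main contributions become $\zeta(r+1)\,x\sum_{m\le x^{1/(2r+2)}}\mu(m)m^{-(2r+2)}$ and $\zeta\bigl(1/(r+1)\bigr)\,x^{1/(r+1)}\sum_{m\le x^{1/(2r+2)}}\mu(m)m^{-2}$, where I used $(2r+2)/(r+1)=2$. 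Completing the two $\mu$-sums to $1/\zeta(2r+2)$ and $1/\zeta(2)$, the tails are controlled by convergent series with remainders $O\bigl(x^{-(2r+1)/(2r+2)}\bigr)$ and $O\bigl(x^{-1/(2r+2)}\bigr)$ respectively; after the prefactors $x$ and $x^{1/(r+1)}$ both tails inject only $O(x^{1/(2r+2)})$ into the error. This identifies $A=\zeta(r+1)/\zeta(2r+2)$ and $B=\zeta\bigl(1/(r+1)\bigr)/\zeta(2)$.

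The final piece is the error sum $\sum_{m\le x^{1/(2r+2)}}\mu(m)\,\Delta(1,r+1;x/m^{2r+2})$. Using $|\mu(m)|\le1$, the bound \eqref{eq:Delta-theta-theta'}, and $\log(x/m^{2r+2})\le\log x$, I would reduce it to
$$x^{\theta_{r+1}}\log^{\theta'_{r+1}}x\sum_{m\le x^{1/(2r+2)}}m^{-(2r+2)\theta_{r+1}}.$$
The behaviour then splits on the sign of $(2r+2)\theta_{r+1}-1$: if $\theta_{r+1}>1/(2r+2)$ the $m$-series converges and the bound is $O\bigl(x^{\theta_{r+1}}\log^{\theta'_{r+1}}x\bigr)$, while if $\theta_{r+1}<1/(2r+2)$ the partial sum is $\ll\bigl(x^{1/(2r+2)}\bigr)^{1-(2r+2)\theta_{r+1}}$, collapsing the whole expression to $O\bigl(x^{1/(2r+2)}\log^{\theta'_{r+1}}x\bigr)$. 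In either regime the exponent is $\max(\theta_{r+1},1/(2r+2))$, which is precisely $\E_{r+1}(x)$.

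The only genuinely delicate point is this last dichotomy, and specifically the borderline $\theta_{r+1}=1/(2r+2)$, in which the $m$-sum would gain a spurious factor $\log x$ and spoil the stated log-power. To dispose of it I would verify that equality never arises: for $r+1\ge7$ the formula $\theta_b=1/(b+7/2)$ gives $\theta_{r+1}>1/(2r+2)$ strictly, and for the smaller indices one reads off Table~\ref{tbl:tauab} directly. Granting this, the three contributions assemble routinely into the claimed asymptotic, and everything else is bookkeeping.
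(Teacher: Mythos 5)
Your proof is correct and follows essentially the same route as the paper: the convolution identity \eqref{eq:taur-star-representation}, the decomposition $T=H+\Delta$ from the first lemma, completion of the two M\"obius sums (yielding the same $A=\zeta(r+1)/\zeta(2r+2)$ and $B=\zeta(1/(r+1))/\zeta(2)$), and the dichotomy on $(2r+2)\theta_{r+1}$ versus $1$ for the $\Delta$-sum. If anything, your explicit check that the borderline $\theta_{r+1}=1/(2r+2)$ never occurs is more careful than the paper's own argument, which silently absorbs that case into the bound $1+x^{1/(2r+2)-\theta_{r+1}}$.
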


\proof
Taking into account \eqref{eq:taur-star-representation} we have for $r>0$
\begin{multline*}
\sum_{n\le x} \taur(n)
= \sum_{n\le x} \mu_{2r+2}(n) T(1,r+1; x/n)
= \zeta(r+1) x \sum_{n\le x} {\mu_{2r+2}(n) \over n}
+ \\
+ \zeta\bigl(1/(r+1)\bigr) x^{1/(r+1)} \sum_{n\le x} {\mu_{2r+2}(n) \over n^{1/(r+1)}}
+ \sum_{n\le x} \mu_{2r+2}(n) \Delta(1, r+1, x/n).
\end{multline*}
But for $s \ge 1/k$
$$
\sum_{n\le x} {\mu_k(n) \over n^s}
= {1\over\zeta\bigl(ks\bigr)} - \sum_{n>x} {\mu_k(n) \over n^s}
= {1\over\zeta\bigl(ks\bigr)} + O(x^{1/k-s})
$$
and
\begin{multline*}
\sum_{n\le x} \mu_{2k}(n) \Delta(1, k, x/n)
= \sum_{n\le x^{1/2k}} \mu(n) \Delta(1, k, x/n^{2k})
\ll \\
\ll \sum_{n\le x^{1/2k}} \left( x \over n^{2k} \right)^{\theta_k} \log^{\theta'_k} x
\ll x^{\theta_k} \log^{\theta'_k} x \left( 1 + x^{1/2k - \theta_k} \right)
\ll x^{\max(\theta_k, 1/2k)}  \log^{\theta'_k} x.
\end{multline*}
So
\begin{equation}
\label{eq:sum-of-taur-asymptotic-with-const}
\sum_{n\le x} \taur(n)
= {\zeta(r+1) \over \zeta(2r+2)} x
+ {\zeta\bigl({1 \over r+1}\bigr) \over \zeta(2)} x^{1\over r+1}
+ O \left( x^{\max\bigl(\theta_{r+1}, 1/(2r+2)\bigr)} \log^{\theta'_{r+1}} x \right) .
\end{equation}
\endproof

For the case $r=0$ see \eqref{eq:gioia1966} above.

\begin{lemma}\label{l:a-la-delange}
Let $r>0$, $x^\eps \le y \le x^{1/2r}$. Then under RH we have
\begin{equation}\label{eq:taur-rh-error-term}
\E_r(x) = \sum_{n\le y} \mu(n) \Delta(1,r,x/n^{2r}) + O(x^{1/2+\eps}y^{1/2-r}+x^\eps).
\end{equation}
\end{lemma}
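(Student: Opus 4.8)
The plan is to begin from the explicit shape of $\E_r(x)$ already exposed in the proof of Theorem~\ref{th:sum-of-taur-asymptotic}, reading that computation with the exponent $r$ in place of $r+1$. Writing $Z:=x^{1/2r}$ and using the companion representation $\tau(1,r;\cdot)\star\mu_{2r}$ together with $T(1,r;v)=\zeta(r)v+\zeta(1/r)v^{1/r}+\Delta(1,r;v)$, one gets
$$ \E_r(x) = \sum_{m\le Z}\mu(m)\Delta(1,r;x/m^{2r}) - \zeta(r)x\sum_{m>Z}{\mu(m)\over m^{2r}} - \zeta(1/r)x^{1/r}\sum_{m>Z}{\mu(m)\over m^2}. $$
Thus the statement splits into two tasks: bounding the two Mertens-type tails, and replacing the cut-off $Z$ by $y$ in the first sum, i.e. estimating $V:=\sum_{y<m\le Z}\mu(m)\Delta(1,r;x/m^{2r})$.

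First I would dispose of the tails. Under RH we have $M(t):=\sum_{m\le t}\mu(m)\ll t^{1/2+\eps}$, so partial summation gives $\sum_{m>Z}\mu(m)m^{-2r}\ll Z^{1/2-2r+\eps}$ and $\sum_{m>Z}\mu(m)m^{-2}\ll Z^{1/2-2+\eps}$; multiplying by $\zeta(r)x$ and $\zeta(1/r)x^{1/r}$ respectively and recalling $Z=x^{1/2r}$, both tails are $O(x^{1/4r+\eps})$. Since $x^{1/2}y^{1/2-r}$ equals $x^{1/4r}$ at $y=Z$ and only grows as $y$ decreases (its $y$-exponent $1/2-r$ being negative for $r\ge1$), we have $x^{1/4r+\eps}=O(x^{1/2+\eps}y^{1/2-r})$ throughout $x^\eps\le y\le Z$, so both tails are absorbed into the announced error.

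The core is the bound on $V$, where RH must be used to extract cancellation from $\mu$. I would apply partial summation in $m$ against $M$:
$$ V = \Big[M(t)\Delta(1,r;x/t^{2r})\Big]_{t=y}^{Z} - \int_y^{Z} M(t)\,d_t\,\Delta(1,r;x/t^{2r}). $$
The boundary contribution at $t=Z$ is $M(Z)\Delta(1,r;1)\ll x^{1/4r+\eps}$, again admissible, while the boundary at $t=y$ is $M(y)\Delta(1,r;x/y^{2r})\ll y^{1/2+\eps}(x/y^{2r})^{1/2}=x^{1/2}y^{1/2-r+\eps}$ by the elementary bound $\Delta(1,r;v)\ll v^{1/2}$ of Lemma~1; this is precisely the stated main term. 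The main obstacle is the integral: bounding it crudely by $\sup_{[y,Z]}|M|$ times the total variation of $\Delta(1,r;x/t^{2r})$ is hopeless, since that variation is of order $x/y^{2r}$, and splitting $\Delta=T-H$ before estimating destroys the very cancellation one needs. To respect that cancellation I would instead insert the sawtooth expansion $\Delta(1,r;v)=-\sum_{l\le v^{1/r}}\psi(v/l^r)+O(1)$, with $\psi(w)=\{w\}-\tfrac12$, interchange the order of summation so that $\sum_m\mu(m)$ becomes the innermost sum, and for each $l$ estimate the twisted sum $\sum_m\mu(m)\psi\bigl(x/(l^rm^{2r})\bigr)$ by playing the oscillation of $\psi$ against the square-root cancellation of $\mu$ furnished by $M(t)\ll t^{1/2+\eps}$. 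Establishing these twisted bounds uniformly in $l$, summing them back to recover a total of size $x^{1/2}y^{1/2-r+\eps}$, and collecting the $O(1)$ terms of the hyperbola expansion into the residual $O(x^\eps)$, is the delicate step on which the whole lemma turns.
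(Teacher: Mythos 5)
Your setup is sound and your diagnosis of where the difficulty lies is exactly right, but the proposal does not actually prove the lemma: the bound on $V=\sum_{y<m\le Z}\mu(m)\Delta(1,r;x/m^{2r})$ is precisely the content of the lemma, and you leave it as an acknowledged open step (``the delicate step on which the whole lemma turns''). Partial summation against $M(t)$ followed by a sawtooth expansion and ``twisted'' sums $\sum_m\mu(m)\psi\bigl(x/(l^rm^{2r})\bigr)$ is a speculative program, not an argument: under RH the square-root cancellation of $\mu$ is available through bounds on Dirichlet series such as $\sum_{d>y}\mu(d)d^{-s}\ll y^{1/2-\sigma+\eps}(|t|^\eps+1)$ for $\sigma>1/2+\eps$, and there is no routine way to transfer it into uniform bounds for $\mu$ twisted by the oscillating weight $\psi$; such hybrid sums are a well-known hard point, and nothing in the tools assembled in this paper (Lemmas 1--4) delivers them. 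The easy parts of your write-up --- the RH tail estimates $x^{1/4r+\eps}$ and the boundary term $x^{1/2}y^{1/2-r+\eps}$ --- are correct, but they are not where the lemma lives.

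The paper's proof avoids your obstacle by never forming $V$ at all: it does not split $T=H+\Delta$ in the range $d>y$. Instead it writes the tail as $S_2=\sum_{d>y}\mu(d)T(1,r;x/d^{2r})$, whose generating Dirichlet series is $\zeta(s)\zeta(rs)g_y(2rs)$ with $g_y(s)=\zeta^{-1}(s)-\sum_{d\le y}\mu(d)d^{-s}$, applies Perron's formula with $T=x^2$, and shifts the contour to $\sigma=1/2+\eps$. RH enters only through the pointwise bound $g_y(s)\ll y^{1/2-\sigma+\eps}(|t|^\eps+1)$, which on the shifted line yields exactly the error $O(x^{1/2+\eps}y^{1/2-r})$, while the residue at $s=1$ reproduces the main-term tail $\zeta(r)x\sum_{d>y}\mu(d)d^{-2r}$ that you estimated separately. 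This is the Montgomery--Vaughan device, and it is the missing idea: the cancellation you need is harvested on a vertical line in the complex plane, where $T$ (kept whole) has a meromorphic generating function, rather than summand-by-summand against the remainder $\Delta$, which has no usable Dirichlet series.
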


\proof
We follow the approach of Montgomery and Vaughan (see \cite{montgomery1981} or~\cite{cao2010}).

\medskip

First of all consider
$$ g_y(s) = {1\over \zeta(s)} - \sum_{d\le y} {\mu(d)\over d^s}. $$
Then for $\sigma>1$
$$ g_y(s) = \sum_{d>y} {\mu(d)\over d^s}. $$
Assuming RH we have by \cite[Th. 14.25]{titchmarsh1986}
$$ \sum_{d\le y} {\mu(d)\over d^s} = \zeta^{-1}(s) + O\bigl( y^{1/2-\sigma+\eps} (|t|^\eps+1) \bigr) \qquad \text{~for~} \sigma>1/2+\eps,$$
so
\begin{equation}\label{eq:gy_s}
g_y(s) \ll y^{1/2-\sigma+\eps} (|t|^\eps+1) \qquad \text{~for~} \sigma>1/2+\eps.
\end{equation}

Now let us split $\sum_{n\le x} \tau^{(r-1)}(n)$ into two parts:
$$ \sum_{n\le x} \tau^{(r-1)}(n)
= \sum_{d^{2r}\le x} \mu(d) T(1,r; x/d^{2r}) = S_1+S_2,$$
where
\begin{multline*}
S_1 := \sum_{d\le y} \mu(d) T(1,r; x/d^{2r})
= \zeta(r) x \sum_{d\le y} {\mu(d) \over d^{2r}} + \zeta(1/r) x^{1/r} \sum_{d\le y} {\mu(d)\over d^2} + \\ + \sum_{d\le y} \mu(d) \Delta(1,r; x/d^{2r})
\end{multline*}
and $S_2$ is the rest of $\sum_{n\le x} \tau^{(r-1)}(n)$.
We note that under RH by taking into account~$y\le x^{1/2r}$ we have
$$ x^{1/r} \sum_{d>y} {\mu(d)\over d^2} \ll x^{1/r} y^{-3/2+\eps} \ll x^{1/2}y^{1/2-r+\eps} $$
and so
$$ x^{1/r} \sum_{d\le y} {\mu(d)\over d^2} = {x^{1/r}\over\zeta(2)} + O(x^{1/2}y^{1/2-r+\eps}). $$

Next, let
$$h_y(s):= \zeta(s) \zeta(rs) g_y(2rs) x^s s^{-1}.$$
Then by Perron formula with $c=1+\eps$, $T=x^2$ one can estimate
$$ S_2 = {1\over 2\pi i} \int_{1+\eps-ix^2}^{1+\eps+ix^2} h_y(s) ds + O(x^\eps). $$
By moving line of integration to $[1/2+\eps-ix^2, 1/2+\eps+ix^2]$ we obtain
$$ S_2 = \res_{s=1} h(s) + O(I_1+I_2+I_3), $$
where
$$
I_1 = \int_{1  +\eps-ix^2}^{1/2+\eps-ix^2} h(s)ds, \qquad
I_2 = \int_{1/2+\eps-ix^2}^{1/2+\eps+ix^2} h(s)ds, \qquad
I_3 = \int_{1/2+\eps-ix^2}^{1  +\eps-ix^2} h(s)ds.
$$
Due to \eqref{eq:gy_s} and estimates of $\zeta$ under RH we have
\begin{align*}
g_y(2rs) \ll y^{1/2-r} (|t|^\eps+1) & \qquad \text{~for~} \sigma>1/2+\eps,
\\
h(s) \ll y^{1/2-r} (|t|^\eps+1) x^s s^{-1} & \qquad \text{~for~} \sigma>1/2+\eps,
\end{align*}
and
$$ I_{1,3} \ll y^{1/2-r+\eps} \max_{\sigma\in[1/2+\eps, 1+\eps]} x^{\sigma-2} \ll y^{1/2-r+\eps}, $$
$$ I_2 \ll y^{1/2-r+\eps} \int_1^{x^2} x^{1/2} t^{-1}dt
\ll y^{1/2-r+\eps} x^{1/2+\eps}.$$
Identity
$$ \res_{s=1} h(s) = \zeta(r) x \sum_{d>y} {\mu(d)\over d^{2r}} $$
completes the proof.
\endproof

\begin{theorem}
If $\Delta$ is estimated as in \eqref{eq:Delta-theta-theta'} and $\theta_{r} < 1/2r$ then under RH
$$ \E_{r}(x) = O(x^{\alpha}), \qquad \alpha = {1-\theta_r \over 2r+1-4r\theta_r}. $$
\end{theorem}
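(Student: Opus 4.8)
The plan is to feed the optimal choice of the parameter $y$ into Lemma~\ref{l:a-la-delange}. By that lemma, under RH and for $x^\eps \le y \le x^{1/2r}$ we have $\E_r(x) = \sum_{n\le y}\mu(n)\Delta(1,r,x/n^{2r}) + O(x^{1/2+\eps}y^{1/2-r}+x^\eps)$, so it remains to bound the sum, balance it against the explicit error term by a judicious choice of $y$, and verify that this choice is admissible.

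First I would estimate the sum trivially. Applying \eqref{eq:Delta-theta-theta'} to each summand gives $\Delta(1,r,x/n^{2r}) \ll (x/n^{2r})^{\theta_r}\log^{\theta'_r}x$, hence $\sum_{n\le y}\mu(n)\Delta(1,r,x/n^{2r}) \ll x^{\theta_r}\log^{\theta'_r}x\sum_{n\le y}n^{-2r\theta_r}$. Here the hypothesis $\theta_r<1/2r$ enters decisively: it forces $2r\theta_r<1$, so the partial sum of $n^{-2r\theta_r}$ diverges and is $\ll y^{1-2r\theta_r}$. Thus the sum is $\ll x^{\theta_r}y^{1-2r\theta_r}\log^{\theta'_r}x$, and altogether $\E_r(x) \ll x^{\theta_r}y^{1-2r\theta_r}\log^{\theta'_r}x + x^{1/2+\eps}y^{1/2-r}+x^\eps$.

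The first term grows with $y$ while the second decays with $y$ (for $r\ge1$ the exponent $1/2-r$ is negative), so the optimal $y$ equates the two. Setting $x^{\theta_r}y^{1-2r\theta_r}=x^{1/2}y^{1/2-r}$ and solving gives $y=x^\beta$ with $\beta=(1-2\theta_r)/(1+2r-4r\theta_r)$; substituting this value back into either balanced term yields the exponent $\alpha=\theta_r+\beta(1-2r\theta_r)$, which simplifies to $\alpha=(1-\theta_r)/(2r+1-4r\theta_r)$ after clearing the common denominator. The logarithmic factor $\log^{\theta'_r}x$ and the stray $x^\eps$ are then absorbed into the power $x^\alpha$.

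Finally I would verify that this $y$ is admissible for Lemma~\ref{l:a-la-delange}. Since $\theta_r<1/2r\le 1/2$ (recall $r\ge1$), the numerator $1-2\theta_r$ is positive and the denominator satisfies $1+2r-4r\theta_r>1+2r-2>0$, so $\beta>0$ and $y\ge x^\eps$ for sufficiently small $\eps$; moreover the inequality $\beta\le 1/2r$ reduces, after cross-multiplication by the positive denominators, to the trivial $0\le1$. The only genuine care needed is this admissibility check — the optimization itself is a one-line algebraic balancing — since Lemma~\ref{l:a-la-delange} is available only in the window $x^\eps\le y\le x^{1/2r}$, and the estimate would collapse if the optimal $y$ fell outside it.
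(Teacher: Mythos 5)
Your proof is correct and follows essentially the same route as the paper: insert the trivial bound $\Delta(1,r,x/n^{2r})\ll (x/n^{2r})^{\theta_r+\eps}$ into Lemma~\ref{l:a-la-delange}, sum using $2r\theta_r<1$, and balance the two terms with $y=x^\beta$, $\beta=(1-2\theta_r)/(2r+1-4r\theta_r)$, yielding $\alpha=(1-\theta_r)/(2r+1-4r\theta_r)$. Your explicit check that this $\beta$ lies in the admissible window $x^\eps\le y\le x^{1/2r}$ is a detail the paper leaves implicit, but it does not change the argument.
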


\proof
Let us start with \eqref{eq:taur-rh-error-term}:
\begin{multline*}
\E_r(x) = \sum_{n\le y} \mu(n) \Delta(1,r, x/n^{2r}) + O(x^{1/2+\eps} y^{1/2-r} + x^\eps)
\ll \sum_{n\le y} \left( x\over n^{2r} \right)^{\theta_r+\eps}
+ \\ +
O(x^{1/2+\eps}y^{1/2-r}+x^\eps)
\ll
x^\eps \left( x^{\theta_r} \left( 1+y^{1-2r\theta_r} \right) + x^{1/2}y^{1/2-r} + 1 \right).
\end{multline*}
If $\theta_{r} < 1/2r$ then
$$
\E_r(x) \ll x^{\eps} \left( x^{\theta_r} y^{1-2r\theta_r} + x^{1/2}y^{1/2-r} \right).
$$
Choice $y = x^{\beta}$, where
$$\beta = {1-2\theta_r \over 2r+1-4r\theta_r},$$
accomplishes the proof.
\endproof

For the values of $\theta_b$ from Table \ref{tbl:tauab} we have
$$
\max\bigl(\theta_{r}, 1/2r\bigr) = \begin{cases}
                               1/2r,     & r\le 2, \\
                               \theta_r, & r>2.
                               \end{cases}
$$
So currently the only non-trivial case of the previous theorem is an estimation for~$\tau^{(1)} \equiv \taues$. We get under assumption of RH that
$$ \sum_{n\le x} \tau^{(1)}(n) = {\zeta(2)\over \zeta(4)} x + {\zeta(1/2) \over \zeta(2)} x^{1/2} + O(x^{\alpha+\eps}), $$
where
$$ \alpha = {1-\theta_{2} \over 5-8\theta_{2}} = {3728\over 15469} \approx 0.241 < 1/4.$$

\begin{theorem}
\begin{equation}
\label{eq:sum-of-taur-asymptotic-Omega}
\E_{r}(x) = \Omega\left(x^{1/(2r+2)}\right).
\end{equation}
\end{theorem}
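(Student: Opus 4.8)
The plan is to read the $\Omega$-bound off Lemma~\ref{l:kuhleitner1994} applied to the multiplicative function $a=\tau^{(r-1)}$, whose summatory function carries $\E_r$ as its error term. Recall from \eqref{eq:dirichlet-taur} (with $r$ replaced by $r-1$) that
$$ \sum_{n=1}^\infty {\tau^{(r-1)}(n)\over n^s} = {\zeta(s)\,\zeta(rs) \over \zeta(2rs)}, \qquad \sigma>1, $$
so in the notation of Lemma~\ref{l:kuhleitner1994} we take $K=2$ with $m_1=1\le m_2=r$, and $J=1$ with $n_1=2r$. First I would compute the critical exponent
$$ \alpha = {K-1 \over 2\sum_{k=1}^{K} m_k} = {1 \over 2(1+r)} = {1\over 2r+2}, $$
which is exactly the exponent appearing in the statement.

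Next I would verify the two hypotheses of the lemma. The side condition $1/\alpha < 2n_j$ reads $2r+2 < 4r$, i.e. $r>1$; this is the one genuine restriction, and it holds for every integer $r\ge2$. For the polynomial part I would take $H(x)$ to be the pair of main terms already produced in \eqref{eq:sum-of-taur-asymptotic-with-const},
$$ H(x) = {\zeta(r)\over\zeta(2r)}\,x + {\zeta(1/r)\over\zeta(2)}\,x^{1/r}, $$
whose exponents $\beta_1=1$ and $\beta_2=1/r$ both satisfy $\alpha<\Re\beta_i\le1$, since $1/(2r+2)<1/r\le1$ for $r\ge1$. With both hypotheses confirmed, Lemma~\ref{l:kuhleitner1994} yields $\sum_{n\le x}\tau^{(r-1)}(n) = H(x) + \Omega(x^{\alpha})$, and subtracting the main terms $H(x)$ leaves exactly $\E_r(x)=\Omega(x^{1/(2r+2)})$, as claimed.

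The step I expect to be the crux is not any calculation but the verification that the lemma is genuinely applicable, which hinges entirely on the inequality $1/\alpha<2n_j$. Intuitively this condition guarantees that the zeros of the denominator $\zeta(2rs)$ do not contribute oscillations competing at the scale $x^{\alpha}$, so that the singular behaviour of the numerator driving the $\Omega$-oscillation survives; it also ensures the secondary term $x^{1/r}$ legitimately sits inside $H(x)$ rather than being absorbed into the remainder. Everything reduces to $r>1$, so the argument delivers the stated $\Omega$-result for all $r\ge2$. The degenerate value $r=1$ (where the condition collapses to the equality $4=4$) is precisely the unitary-divisor case $\tau^{(0)}$, already governed by \eqref{eq:gioia1966}, and is therefore handled separately.
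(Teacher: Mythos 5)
Your proposal is correct and is essentially the paper's own proof: the identical substitution $m_1=1$, $m_2=r$, $n_1=2r$ (read off from the Dirichlet series $\zeta(s)\zeta(rs)/\zeta(2rs)$ of $\tau^{(r-1)}$) into Lemma~\ref{l:kuhleitner1994}, yielding $\alpha=1/(2r+2)$. You are in fact more explicit than the paper, which never verifies the hypothesis $1/\alpha<2n_1$ (equivalent to $r>1$); this restriction is harmless because $\E_r$ is only defined, via Theorem~\ref{th:sum-of-taur-asymptotic}, for $r\ge2$ --- though your closing remark that the $r=1$ case is ``governed by'' \eqref{eq:gioia1966} overstates matters, since that formula provides only an $O$-bound and no $\Omega$-estimate.
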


\proof
Equation \eqref{eq:sum-of-taur-asymptotic-Omega} is implied by the substitution
$m_1 = 1$, $ m_2 = r$, $n_1=2r$
into Lemma \ref{l:kuhleitner1994}. The choice of parameters plainly follows from~\eqref{eq:dirichlet-taur}. We obtain
$$\alpha = {1\over 2r+2}, $$
which is an exponent in the required $\Omega$-term.
\endproof

\section{Asymptotic properties of \texorpdfstring{$\sum \sigmar$}{Σ σr(n)}}

\begin{lemma}
Let $G_r(s)$ be Dirichlet series for $\sigmar$:
$$ G_r(s) := \sum_{n=1}^\infty {\sigmar(n)\over n^s}. $$
Then
\begin{equation}\label{eq:dirichlet-sigmaes}
G_r(s) = {\zeta(s-1) \zeta\bigl((r+1)s-r\bigr) \over \zeta\bigl((r+2)s-r-1\bigr)} H_r(s), \qquad \sigma > 2,
\end{equation}
where Dirichlet series $H_r(s)$ converges absolutely for $\sigma > (2r+2)/(2r+3)$.
\end{lemma}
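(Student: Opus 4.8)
The plan is to mimic the computation behind \eqref{eq:dirichlet-taur}: first determine the local (Bell) series of $\sigmar$, then peel off the three zeta-factors appearing in \eqref{eq:dirichlet-sigmaes} and show that what remains is an Euler product converging in the claimed half-plane. Writing $w=p^{-s}$ and decomposing $\sigmar(p^a)$ as $p^a$ for all $a\ge0$ plus the extra $p^r$ for $a>r$, the Bell series factors as
$$ \sum_{a=0}^\infty \sigmar(p^a)w^a = \frac{1}{1-p^{1-s}} + \frac{p^{r-(r+1)s}}{1-p^{-s}}, $$
so that $G_r(s)=\prod_p(\cdots)$. Putting the two summands over a common denominator identifies $\zeta(s-1)=\prod_p(1-p^{1-s})^{-1}$ as the factor absorbing the geometric series $\sum_a p^aw^a$, while the residual numerator supplies $\zeta((r+1)s-r)$ through the term $p^{r-(r+1)s}$. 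This already reproduces the numerator of \eqref{eq:dirichlet-sigmaes}; the denominator factor $\zeta((r+2)s-r-1)^{-1}=\prod_p(1-p^{r+1-(r+2)s})$ is then the natural next correction, and I would define $H_r(s):=\prod_p H_{r,p}$ by dividing the local factor of $G_r$ by these three selected Euler factors.

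The heart of the argument is an algebraic simplification of $H_{r,p}$. With the abbreviations $w=p^{-s}$ and $B=p^{r-(r+1)s}=p^rw^{r+1}$ (so that $p^{r+1-(r+2)s}=pwB$), a direct computation should collapse the quotient to the compact form
$$ H_{r,p}-1 = \frac{B\,(w-B)\,(1-pw)}{(1-w)\,(1-pwB)}. $$
Everything then reduces to estimating this single rational expression. For $\sigma>(2r+2)/(2r+3)$ the denominators stay bounded away from zero (since then $\sigma>(r+1)/(r+2)$ forces $|pwB|<1$ for every $p$), so the size of $H_{r,p}-1$ is governed by the numerator. The point worth stressing is the factor $1-pw=1-p^{1-s}$ inherited from $\zeta(s-1)$: in the critical range $(2r+2)/(2r+3)<\sigma\le1$ one has $|1-p^{1-s}|\asymp p^{1-\sigma}$, and it is precisely this amplification that lifts the naive abscissa $(r+1)/(r+2)$ (dictated by the leading monomial $p^rw^{r+2}$) up to the stated $(2r+2)/(2r+3)$. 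Multiplying the three numerator factors yields the uniform bound $|H_{r,p}-1|\ll p^{\,2r+1-(2r+3)\sigma}$.

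Consequently $\sum_p|H_{r,p}-1|$ converges as soon as $(2r+3)\sigma-(2r+1)>1$, i.e. for $\sigma>(2r+2)/(2r+3)$, giving absolute convergence of the Euler product there. To upgrade this to absolute convergence of the Dirichlet series $H_r(s)$ I would rerun the estimate with the expansion coefficients replaced by their absolute values; since the numerator of $H_{r,p}-1$ carries only the four monomials $p^rw^{r+2}$, $p^{r+1}w^{r+3}$, $p^{2r}w^{2r+2}$, $p^{2r+1}w^{2r+3}$ up to sign, and the denominators expand with non-negative coefficients, the bound $\ll p^{\,2r+1-(2r+3)\sigma}$ survives. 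The main obstacle I anticipate is twofold: obtaining the clean closed form for $H_{r,p}-1$ without sign errors, and verifying that none of the mixed monomials produced when $1/((1-w)(1-pwB))$ is expanded against the numerator carries a weight exceeding $(2r+2)/(2r+3)$; a short monotonicity check of $\tfrac{a_0+(r+1)j+1}{b_0+i+(r+2)j}$ in $i,j\ge0$ for each of the four numerator monomials settles this, the maximum being attained at $i=j=0$ on the monomial $p^{2r+1}w^{2r+3}$.
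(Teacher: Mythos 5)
Your proposal is correct and follows essentially the same route as the paper: the same Bell-series computation $\sigmar_p(x)=\frac{1}{1-px}+\frac{p^rx^{r+1}}{1-x}$, the same definition of $H_r$ as the quotient by the three selected Euler factors, and the same closed form for the local factor minus one --- your $\frac{B(w-B)(1-pw)}{(1-w)(1-pwB)}$ is exactly the paper's $\frac{p^rx^{r+2}(1-px)(1-p^rx^r)}{(1-x)(1-p^{r+1}x^{r+2})}$ --- leading to the identical bound $\ll p^{2r+1-(2r+3)\sigma}$ and the same abscissa $(2r+2)/(2r+3)$. Your extra coefficient-level check that the Dirichlet series itself (not merely the Euler product) converges absolutely is a point the paper leaves implicit, but it is a refinement of the same argument rather than a different approach.
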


\begin{proof}
Consider Bell series for $\sigmar$:
\begin{multline*}
\sigmar_p(x)
:= \sum_{k=0}^\infty \sigmar(p^k) x^k
= \sum_{k=0}^r p^kx^k + \sum_{k>r} (p^r+p^k)x^k
= \sum_{k=0}^\infty p^kx^k + \sum_{k>r} p^rx^k
=\\
= {1\over 1-px} + {p^rx^{r+1} \over 1-x}.
\end{multline*}
Then
$$
(1-px) \sigmar_p(x) = 1 + {p^rx^{r+1}(1-px) \over 1-x}
= 1 + \sum_{k=0}^\infty (p^rx^{r+1+k}-p^{r+1}x^{r+2+k})
$$
and
$$
{(1-px)(1-p^rx^{r+1})\over 1-p^{r+1}x^{r+2}} \sigmaes_p(x)
= 1 + {p^rx^{r+2}(1-px)(1-p^rx^r) \over (1-x)(1-p^{r+1}x^{r+2})}
:= h_p(x).
$$
For $\sigma > 1 $ we have
$$ h_p(p^{-s}) \ll p^{-2}. $$
For $1 \ge \sigma \ge (2r+2)/(2r+3) + \eps$ we have
$$
h_p(p^{-s}) \ll p^{2r+1-(2r+3)s} \ll p^{-1-\eps}.
$$
Now \eqref{eq:dirichlet-sigmaes} follows from the representation of $G_r$ in the form of infinite product by~$p$:
$$
G_r(s) = \prod_p \sigmar_{p}(p^{-s}).
$$

\end{proof}

Following theorem generalizes \eqref{eq:sitaramachandrarao1973}.

\begin{theorem}
$$ \sum_{n\le x} \sigmar(n) = Dx^2 + O(x\log^{5/3} x), \qquad  D = {\zeta(r+2) H_r(2) \over 2\zeta(r+3)}. $$
\end{theorem}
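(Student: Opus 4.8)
The plan is to split off from $G_r(s)$ exactly the factor that Lemma~\ref{l:petermann1997} is built to handle. Write
\[
G_r(s) = \zeta(s-1)\,\zeta\bigl((r+1)s-r\bigr)\,W_r(s),\qquad W_r(s):=\frac{H_r(s)}{\zeta\bigl((r+2)s-r-1\bigr)},
\]
which is legitimate by \eqref{eq:dirichlet-sigmaes}. Let $b_r$ and $c_r$ be the arithmetic functions with Dirichlet series $\zeta(s-1)\zeta((r+1)s-r)$ and $W_r(s)$, so that $\sigmar = b_r \star c_r$. A direct reading of the two zeta-factors gives $b_r(n)=\sum_{mk^{r+1}=n} m\,k^r$, hence $\sum_{m\le y} b_r(m)=\sum_{mk^{r+1}\le y} m\,k^r$ is precisely the sum of Lemma~\ref{l:petermann1997} with $\alpha=r+1$, $\beta=r$. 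Since here $\beta+1=\alpha$ (the logarithmic case), and $2\alpha-\beta=r+2$, I obtain the key building block
\[
B_r(y):=\sum_{m\le y} b_r(m)=\frac{\zeta(r+2)}{2}\,y^2+O\bigl(y\log^{2/3}y\bigr).
\]
The factor $W_r(s)$, being a product of $H_r(s)$ (absolutely convergent for $\sigma>(2r+2)/(2r+3)$) and the reciprocal zeta $1/\zeta((r+2)s-r-1)$ (absolutely convergent for $\sigma>1$), converges absolutely for $\sigma>1$; this is what makes $c_r$ small enough to serve as the ``smooth'' part of the convolution.

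Next I would carry out the hyperbola-free convolution in its simplest form, summing over the small variable $d$:
\[
\sum_{n\le x}\sigmar(n)=\sum_{d\le x} c_r(d)\,B_r(x/d).
\]
Inserting the asymptotic for $B_r$ splits the right-hand side into a main term $\tfrac{1}{2}\zeta(r+2)\,x^2\sum_{d\le x} c_r(d)/d^2$ and an error term $\ll x\sum_{d\le x}\lvert c_r(d)\rvert\,d^{-1}\log^{2/3}(x/d)$. For the main term I complete the Dirichlet series: $\sum_{d\le x}c_r(d)/d^2=W_r(2)+O\bigl(\sum_{d>x}\lvert c_r(d)\rvert d^{-2}\bigr)$, and since $W_r(2)=H_r(2)/\zeta(r+3)$ this produces exactly $Dx^2$ with $D=\zeta(r+2)H_r(2)/\bigl(2\zeta(r+3)\bigr)$.

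The whole error analysis then reduces to understanding the partial sums of $\lvert c_r\rvert$. Here $\sum_d \lvert c_r(d)\rvert d^{-s}$ is dominated by $H_r^{*}(s)\,\zeta((r+2)s-r-1)$ (with $H_r^{*}$ the series of absolute values of the coefficients of $H_r$), whose rightmost singularity is a simple pole at $s=1$ coming from the zeta-factor; consequently $\sum_{d\le x}\lvert c_r(d)\rvert\ll x$, and by partial summation $\sum_{d>x}\lvert c_r(d)\rvert d^{-2}\ll x^{-1}$ together with $\sum_{d\le x}\lvert c_r(d)\rvert\,d^{-1}\ll\log x$. The first estimate makes the main-term tail contribute only $O(x)$; the second, combined with $\log^{2/3}(x/d)\le\log^{2/3}x$, turns the propagated Petermann error into $x\log^{2/3}x\cdot\log x\ll x\log^{5/3}x$, which is exactly the claimed bound and explains the appearance of the exponent $5/3=2/3+1$.

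I expect the main obstacle to be precisely the partial-sum control $\sum_{d\le x}\lvert c_r(d)\rvert\ll x$: a crude bound from absolute convergence only yields $x^{1+\eps}$, which would spoil the $\log$-power, so one must genuinely exploit that the abscissa of $\sum\lvert c_r(d)\rvert d^{-s}$ is a simple pole at $s=1$ (for instance via a Tauberian argument for non-negative coefficients). Two minor points remain routine: verifying the elementary coefficient identity for $b_r$, and treating the endpoint $r=0$, where $\beta=r=0$ falls outside the hypothesis of Lemma~\ref{l:petermann1997}; that case is the unitary one already recorded in \eqref{eq:sitaramachandrarao1973}, which the present statement recovers with $H_0(2)=1$.
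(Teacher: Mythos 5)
Your proposal is correct, and at bottom it is the paper's argument with one organizational change. Both proofs rest on the same two pillars: Lemma~\ref{l:petermann1997} with $(\alpha,\beta)=(r+1,r)$ applied to the coefficients of $\zeta(s-1)\zeta\bigl((r+1)s-r\bigr)$, and the propagation of its $y\log^{2/3}y$ error through a convolution against coefficients whose weighted partial sums $\sum_{d\le x}|\cdot|/d$ grow like $\log x$ --- which is where $5/3=2/3+1$ arises in both treatments. The difference is the grouping of the remaining factors: the paper keeps $1/\zeta\bigl((r+2)s-r-1\bigr)$ attached to the zeta-pair, writing $z(n)=\sum_{ab^{r+1}c^{r+2}=n}ab^rc^{r+1}\mu(c)$ and summing explicitly over $c\le x^{1/(r+2)}$ before convolving with $H_r$, whereas you merge $1/\zeta\bigl((r+2)s-r-1\bigr)$ with $H_r(s)$ into $W_r(s)$ and perform a single convolution. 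That merge is exactly what creates the step you flag as the main obstacle, $\sum_{d\le x}|c_r(d)|\ll x$; but no Tauberian argument is needed there. The coefficients of $1/\zeta\bigl((r+2)s-r-1\bigr)$ are supported on $(r+2)$-th powers $n=c^{r+2}$ with absolute value $|\mu(c)|c^{r+1}$, so their absolute partial sums are at most $\sum_{c\le x^{1/(r+2)}}c^{r+1}\ll x$; convolving with $|h|$, which satisfies $\sum_n|h(n)|/n<\infty$, preserves the bound $\ll x$, and your remaining estimates $\sum_{d\le x}|c_r(d)|/d\ll\log x$ and $\sum_{d>x}|c_r(d)|/d^2\ll x^{-1}$ then follow by partial summation. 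So your route works, at the cost of proving a small lemma that the paper's grouping makes unnecessary, since there the same support structure is used directly and explicitly. Your closing remark is also on target: the paper's own proof tacitly needs $r\ge1$, since Lemma~\ref{l:petermann1997} requires $\beta>0$, and the case $r=0$ is exactly \eqref{eq:sitaramachandrarao1973}, consistent with $H_0\equiv1$.
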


\proof
For a fixed $r$ let $z(n)$ be the coefficient at $n^{-s}$ of the Dirichlet series
$$
{\zeta(s-1) \zeta\bigl((r+1)s-r\bigr) \over \zeta\bigl((r+2)s-r-1\bigr)}
$$
and let $h(n)$ be the coefficient  of the Dirichlet series $H_r(s)$. It follows from \eqref{eq:dirichlet-sigmaes} that
$ \sigmar = z \star h $.
One can verify that
$$ z(n) = \sum_{ab^{r+1}c^{r+2}=n} ab^rc^{r+1}\mu(c). $$
Taking into account Lemma \ref{l:petermann1997} with $(\alpha,\beta) = (r+1,r)$ we obtain
\begin{multline*}
\sum_{n\le x} z(n) = \sum_{c \le x^{1/(r+2)}} c^{r+1} \mu(c) \left(
{\zeta(r+2) \over 2} {x^2\over c^{2r+4}} + O\bigl(x c^{-r-2} \log^{2/3} x\bigr)
\right)
=\\
= {\zeta(r+2) \over 2\zeta(r+3)} x^2 + O(x \log^{5/3} x)
\end{multline*}
because
$$ \sum_{c \le x^{1/(r+2)}} {\mu(c) \over c^{r+3}} = {1\over\zeta(r+3)} - \sum_{c > x^{1/(r+2)}} {\mu(c) \over c^{r+3}} = {1\over\zeta(r+3)} + O(x^{-1}) $$
and
$$ \sum_{c \le x^{1/(r+2)}} {\mu(c) \over c} \ll \sum_{c\le x} {1\over c} \ll \log x. $$
Now
\begin{multline*}
\sum_{n\le x} \sigmar(n)
= \sum_{n\le x} h(n) \left( {\zeta(r+2) \over 2\zeta(r+3)} {x^2\over n^2} + O\left({x\over n} \log^{5/3} x\right) \right)
=\\
= {\zeta(r+2) \over 2\zeta(r+3)} x^2 \sum_{n\le x} {h(n)\over n^2}
+ O\biggl( x \log^{5/3} x \sum_{n\le x} {h(n) \over n} \biggr).
\end{multline*}
But $H_r(s)$ converges absolutely at $\sigma \ge (2r+2)/(2r+3)+\eps$, so
$$ \sum_{n\le x} {h(n) \over n} \ll O(1) $$
and
$$ \sum_{n\le x} {h(n)\over n^2} = H_r(2) - \sum_{n>x} {h(n)\over n^2}
= H_r(2) + O(x^{-(2r+4)/(2r+3)+\eps}). $$
\endproof

\begin{theorem}
For a fixed $r>0$
$$ \sum_{n\le x} \sigmar(n) = Dx^2 + \Omega_{\pm} (x\llog x). $$
\end{theorem}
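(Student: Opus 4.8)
The plan is to locate the $x\llog x$–oscillation in the sawtooth behaviour of the lattice sum underlying Lemma~\ref{l:petermann1997}, rather than in the singularities of $G_r(s)$: the poles of $G_r$ at $s=2$ and $s=1$ produce only the smooth term $Dx^2$ and a term of size $O(x)$, while the zeros of $\zeta$ give power–type oscillations far below $x\llog x$. Writing $\sigmar=z\star h$ as in the previous proof and using $\sum_{n\le x}\sigmar(n)=\sum_m h(m)\,Z(x/m)$ with $Z(y)=\sum_{n\le y}z(n)$, I reduce — through the rapidly convergent $c$–summation carrying $c^{r+1}\mu(c)$, and through the absolutely convergent $H_r$ — to the error term $\mathcal{D}(r+1,r;y)$ of $\sum_{ab^{r+1}\le y}ab^{r}$. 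Inserting $\sum_{a\le Y}a=\tfrac12Y^2-Y\psi(Y)+O(1)$ with the sawtooth $\psi(u)=\{u\}-\tfrac12$ collapses the quadratic terms to $\tfrac{\zeta(r+2)}2y^2$ and leaves, up to $O(y^{1/(r+1)})$,
\[
\mathcal{D}(r+1,r;y)=-\,y\,S(y)+\dots,\qquad S(y):=\sum_{b\le y^{1/(r+1)}}\frac{\psi\!\left(y/b^{r+1}\right)}{b}.
\]
Everything then comes down to exhibiting $y$ for which $S(y)$ is of order $\pm\llog y$.

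To drive $S(y)$ to a prescribed sign I would impose simultaneous congruences. Put $L=\operatorname{lcm}\{b^{r+1}:b\le B\}$, so that $\log L\asymp B$ (with an $r$–dependent constant) and hence $\log B\sim\llog y$ for $y\asymp L$. Choosing $y\equiv0\pmod L$ forces $\psi(y/b^{r+1})=-\tfrac12$ for every $b\le B$, whereas $y\equiv-1\pmod L$ gives $\{y/b^{r+1}\}=1-b^{-(r+1)}$ and thus $\psi(y/b^{r+1})=\tfrac12-b^{-(r+1)}>0$ for $2\le b\le B$ (this is where $r>0$ enters). In either case the \emph{head} $\sum_{b\le B}\psi(y/b^{r+1})/b$ equals $\mp\tfrac12\sum_{b\le B}b^{-1}+O(1)=\mp(\tfrac12+o(1))\llog y$, so the two families of $y$ yield $S(y)\asymp\mp\llog y$ and therefore $\mathcal{D}(r+1,r;y)\asymp\pm\,y\llog y$. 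Transporting both signs back through the two outer convolutions gives $\sum_{n\le x}\sigmar(n)-Dx^2=\Omega_\pm(x\llog x)$.

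The real difficulty is that the congruences only control the head $b\le B$, while the \emph{tail} $\sum_{B<b\le y^{1/(r+1)}}\psi(y/b^{r+1})/b$ has trivial size as large as $\log y\gg\llog y$; indeed unconditionally $S(y)\ll\log^{2/3}y$, so enormous cancellation must be extracted from the tail. This is exactly the obstruction already present in the classical case $r=0$ of $\sum_{n\le x}\sigma(n)$, and I would meet it by a second–moment argument: letting $y$ run through the progression $y_0+kL$, the variance of the tail over $k$ is governed by the diagonal $\sum_{b>B}b^{-2}=O(1/B)$, so for a positive proportion of $k$ the tail is $o(\llog y)$ and the head wins. The one point needing genuine care is that those $b>B$ which are $B$–smooth satisfy $b^{r+1}\mid L$, so their sawtooth is frozen and does not average out; these must be absorbed by choosing $B$ (and the modulus) appropriately before the averaging. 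A parallel but milder bookkeeping is needed to push the construction through the $c\ge2$ and $m\ge2$ terms of the two convolutions, where absolute convergence of $H_r$ for $\sigma>(2r+2)/(2r+3)$ keeps those contributions smooth. Finally, the $\llog$ order obtained is consistent with \eqref{eq:sup-sigmar}: by the same kind of mechanism that caps the extremal size of $\sigmar(n)$ at order $n\llog n$, the harmonic head is capped at order $\llog y$, so this method cannot by itself reach the unconditional upper bound $\log^{5/3}x$.
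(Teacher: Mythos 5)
Your underlying mechanism --- forcing the harmonic head $\sum_{b\le B}\psi(y/b^{r+1})/b$ to have a fixed sign by congruences modulo $L=\mathrm{lcm}\{b^{r+1}:b\le B\}$, so that it contributes $\mp(1/2+o(1))\llog y$ --- is the same one that powers the paper's proof (there it appears as $G\gg\prod_{p\le m}(1+1/p)\gg\llog x$). But your execution opens a genuine gap: you localize the oscillation in the innermost lattice sum $\sum_{ab^{r+1}\le y}ab^{r}$ and then must transport it through the two outer convolutions, and that transport step fails as described. At your special $x$ you control only the term $c=m=1$; the terms with $c\ge2$ or $m\ge2$ evaluate $\mathcal{D}(r+1,r;\,\cdot\,)$ at the points $x/(mc^{r+2})$, about which your congruence construction says nothing. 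The only bound available there is the unconditional $\mathcal{D}(r+1,r;u)\ll u\log^{2/3}u$ of Lemma~\ref{l:petermann1997}, and summing it over $c\le x^{1/(r+2)}$, $m\ge 1$ gives $\ll x\log^{2/3}x\sum_{c}c^{-1}\ll x\log^{5/3}x$ --- exactly the error term of the paper's $O$-theorem, and far larger than the $x\llog x$ oscillation you are trying to detect. So this is not ``milder bookkeeping'': either you must extend the second-moment argument to the whole double convolution (correlations of sawtooths across all scales $x/(mc^{r+2})$, where moreover $B\asymp\log x$ is forced by $y\ge L=e^{cB}$, so only square-root cancellation \emph{across} the $(m,c)$-terms would suffice), or you must restructure so that the sign-forcing average is applied to the complete error term at once --- which is precisely what the paper does.

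Two further points. First, even inside the inner sum your variance claim is incomplete: the variance of the tail over the progression $y_0+kL$ is not ``governed by the diagonal''. You must also bound the off-diagonal correlations $K^{-1}\sum_{k\le K}\psi\bigl((y_0+kL)/b^{r+1}\bigr)\psi\bigl((y_0+kL)/b'^{r+1}\bigr)$, which can genuinely be of size $(mn)^{-(r+1)}$ when $b=dm$, $b'=dn$ with $\gcd(m,n)=1$ small (e.g.\ $b'=2b$), and for $b^{r+1}$ near $y$ the progression does not sweep even one full period; these contributions do sum to $O(1/B)$, but that requires an actual resonance argument, not an assertion. (Your worry about smooth frozen moduli is, by contrast, unnecessary: a frozen $b>B$ has $b\mid\mathrm{lcm}\{1,\dots,B\}$, and its sawtooth equals $-1/2$, resp.\ $1/2-b^{-(r+1)}$, in your two cases, so it reinforces the head's sign.) Second, for comparison: the paper follows P\'etermann \cite[Th.~3]{petermann1997}, working with $\kappa(n)=\sigmar(n)/n$ and $\upsilon=\mu\star\kappa$, which vanishes on $p^{a}$ for $a\le r$; thus $\kappa=1\star\upsilon$ is a \emph{single} convolution whose cofactor is supported on $(r+1)$-full numbers and has a rapidly decaying, absolutely convergent tail, and the congruence/averaging device ($x$ running over multiples of $A=\prod_{p\le m}p^{r+1}$, with $G=\sum_{k}\upsilon(k)k^{-1}\gcd(A,k)\gg\llog x$) is applied to the full error term, so nothing remains to be transported. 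If you want to salvage your plan, the cleanest fix is to do the same: run the sawtooth analysis through $\kappa=1\star\upsilon$ rather than through $z\star h$ and the $(a,b,c)$-decomposition.
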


\proof
The proof almost replicates the proof of \cite[Th. 3]{petermann1997} with following changes (in notations of \cite{petermann1997}):
$$ \kappa(n) := {\sigmar(n) \over n}, $$
$$ \sum_{n=1}^\infty {\kappa(n)\over n^s} = {\zeta(s) \zeta\bigl((r+1)s+1\bigr) \over \zeta\bigl((r+2)s+1\bigr)} H_r(s+1), $$
$$ \upsilon := \mu \star \kappa, $$
$$ \sum_{n=1}^\infty {\ups(n)\over n^s} = { \zeta\bigl((r+1)s+1\bigr) \over \zeta\bigl((r+2)s+1\bigr)} H_r(s+1), $$
$$ \ups(p^a) = {\sigmar(p^a)\over p^a} - {\sigmar(p^{a-1})\over p^{a-1}} = \begin{cases}
0, & a \le r+1, \\
1/p, & a=r+1, \\
p^{r-a}-p^{r-a+1}, & a>r+1.
\end{cases} $$
We take $m:=\log^{1/(4r+4)} x$ and
$$ A := \prod_{p\le m} p^{r+1} \sim e^{(r+1)m} \sim \exp(\log^{1/4}x) , $$
then
$$ G = \sum_{k\le u(x)} {\ups(k)\over k} \gcd(A,k)
= \sum_{n^{r+1} \mid A} \ups(n^{r+1}) \suma_{k\le u(x)/n^{r+1}} {\ups(n^{r+1}k)\over \ups(n^{r+1}) k }.$$
Here $\sum^*_k$ means summation over $k$ such that for every $p \mid k$ we have~$p \mid n$ or $p\nmid A$.
Taking into account $\ups(p^{r+1})=1/p$ we get
\begin{multline*}
G = \sum_{n^{r+1} \mid A} \ups(n^{r+1}) \suma_{k\ge1} {\ups(n^{r+1}k)\over \ups(n^{r+1}) k} + o(1)
=\\
= \sum_{n^{r+1} \mid A} \ups(n^{r+1}) \prod_{p \mid n} \left( 1+\sum_{\nu\ge r+2} {\ups(p^\nu)\over p^{\nu-r-2}} \right) \prod_{p>m} \left( 1+\sum_{\nu\ge r+1} {\ups(p^\nu) \over p^\nu}\right) + o(1).
\end{multline*}
Since $|\ups(p^\nu)| \le 1/p$ we obtain
$$ \sum_{\nu\ge r+1} {\ups(p^\nu) \over p^\nu} \ll p^{-r-2}. $$
Since $\ups(n^{r+1}) = 1/n$ for $n^{r+1} \mid A$ and $\log m\asymp\llog x$ we have
\begin{multline*}
G = \bigl(1+o(1)\bigr) \sum_{n^{r+1} \mid A} {1\over n} \prod_{p \mid n} \left( 1+\sum_{\nu\ge r+2} {\ups(p^\nu)\over p^{\nu-r-2}} \right) =
\\=
\bigl(1+o(1)\bigr) \prod_{p\le m} \left( 1+{1\over p} + \sum_{\nu\ge r+2} {\ups(p^\nu) \over p^{\nu-r-1}} \right).
\end{multline*}
But $\ups(p^\nu) \ge 1/2p^{\nu-r-1}$ for $a\ge r+2$. So
$$ \sum_{\nu\ge r+2} {\ups(p^\nu) \over p^{\nu-r-1}} \ge \sum_{\nu\ge r+2} {1\over 2(p^{\nu-r-1})^2} \ge {1\over 2p^2}. $$
Hence
$$ G \ge \bigl(1+o(1)\bigr) \prod_{p\le m} (1+p^{-1}+p^{-2}/2) \gg \prod_{p\le m} (1+p^{-1}) \gg \log m \gg \llog x. $$
\endproof

\section{Some remarks}

The estimate \eqref{eq:sup-taur} implies that $ { \taur(n) / n } \to0$ as $n\to\infty $. Thus it is natural to ask what is the maximum value of this ratio.

\begin{lemma}
For $n\ge1$ we have
$$ \taur(n) \le n, $$
where the equality has a place only if $n=1$ or if $n=2$ and $r=0$.
\end{lemma}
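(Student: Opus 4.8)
The plan is to exploit the multiplicativity of $\taur$ and reduce the inequality to a pointwise comparison on prime powers. First I would observe that by \eqref{eq:taur-def} one always has $\taur(p^a)\le 2$, whereas $p^a\ge 2$ for every $a\ge 1$; hence $\taur(p^a)\le p^a$ for each prime power with $a\ge 1$. Writing $n=\prod_j p_j^{a_j}$ and using that both $\taur$ and the identity function are multiplicative, the bound $\taur(n)\le n$ follows at once by multiplying these factorwise inequalities.

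The more delicate point is the equality analysis, which I would phrase through the ratios $\taur(p_j^{a_j})/p_j^{a_j}\in(0,1]$, whose product is $\taur(n)/n$. Since each ratio is at most $1$ and strictly positive, their product equals $1$ only when every individual ratio equals $1$; thus it suffices to decide when $\taur(p^a)=p^a$ holds for a single prime power. By \eqref{eq:taur-def} the left side equals $1$ when $a\le r$, forcing $p^a=1$, which is impossible for $a\ge 1$; and it equals $2$ when $a>r$, forcing $p^a=2$, i.e.\ $p=2$ and $a=1$. The constraint $a>r$ then reads $1>r$, i.e.\ $r=0$. Consequently a nontrivial equality can occur at exactly one prime, namely $p=2$ with exponent $1$ and $r=0$, giving $n=2$. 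Adding the trivial case $n=1$ (where $\taur(1)=1$ by the convention adopted in the introduction, for every $r$) yields precisely the two equality cases claimed.

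I do not expect a genuine obstacle here: the statement becomes elementary once the reduction to prime powers is made. The only care needed is to justify that equality in a product of factors each lying in $(0,1]$ forces equality in every factor, which uses positivity of the factors, and to treat $n=1$ separately as the empty product. A small subtlety worth flagging is that $r$ ranges over the non-negative integers, so the condition $1>r$ extracted from $a>r$ with $a=1$ is equivalent to $r=0$; this is exactly what singles out the unitary-divisor case in the equality statement.
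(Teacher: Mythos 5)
Your proof is correct, but it follows a different route from the paper's. The paper does not compare $\taur(p^a)$ with $p^a$ factorwise; instead it observes that $\taur(n) = 2^{k}$, where $k$ is the number of primes $p$ with $p^{r+1} \mid n$, and since each such prime contributes at least $2^{r+1}$ to $n$, one gets $k \le (\log_2 n)/(r+1)$ and hence the stronger bound $\taur(n) \le n^{1/(r+1)}$, from which the lemma is said to follow easily. That counting argument buys a genuinely sharper inequality (which in particular makes the case $r\ge 1$ immediate, since then $n^{1/(r+1)} < n$ for all $n>1$), but it leaves the equality analysis -- especially the unitary case $r=0$, where one must still check that $2^{\omega(n)} = n$ only for $n\in\{1,2\}$ -- implicit. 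Your argument, by contrast, yields only the stated bound $\taur(n)\le n$, but the reduction to the pointwise comparison $\taur(p^a)\le p^a$ via multiplicativity makes the equality discussion completely explicit and uniform in $r$: a product of factors in $(0,1]$ equals $1$ only if every factor does, and the only prime power with $\taur(p^a)=p^a$ is $p^a=2$ with $r=0$. So your write-up is, if anything, more complete on the part of the statement the paper glosses over, at the cost of not obtaining the stronger exponent $1/(r+1)$ as a byproduct.
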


\proof
Recalling the definition \eqref{eq:taur-def}
we obtain that the least value of $a$ for which~$\taur(p^a) $ is different from~$1$ is $a=r+1$. So
$$ \taur(n) = 2^{\#\{p^{r+1} \mid n\}} \le 2^{(\log_2 n) / (r+1)} = n^{1/(r+1)} $$
and the statement of the lemma easily follows.
\endproof

\medskip

One can see that \eqref{eq:sup-sigmar} implies
$$ {\sigmar(n) \over n} \to +\infty, \qquad n\to\infty.$$

\begin{theorem}
Consider the distribution function
$$ S_N(q,r; \lambda) := {1\over N} \#\{ n\le N \mid \sigmar(n^q) \le \lambda n^q \}, \qquad q,r \in\N. $$
Then $S_N(q,r; \lambda)$ weakly converges to a function $S(q,r; \lambda)$ which is continuous if and only if~$q>r$.
\end{theorem}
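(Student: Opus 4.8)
The plan is to reduce the theorem to the existence and continuity of the limiting distribution of a single multiplicative function. Writing $g(n):=\sigmar(n^q)/n^q$, one has $S_N(q,r;\lambda)=N^{-1}\#\{n\le N : g(n)\le\lambda\}$, so the assertion is precisely that $g$ admits a limiting distribution $S(q,r;\cdot)$ whose continuity is equivalent to $q>r$. Since $g$ is multiplicative, the first step is to read its local factors off from \eqref{eq:sigmar-def}: for $a\ge0$,
$$ g(p^a)={\sigmar(p^{qa})\over p^{qa}}=\begin{cases} 1, & qa\le r,\\ 1+p^{r-qa}, & qa>r. \end{cases} $$
Two features of this formula drive the whole argument: first, $g(n)\ge1$ for every $n$, so $S$ is supported on $[1,\infty)$; second, $g(p)=1$ when $q\le r$ while $g(p)=1+p^{r-q}>1$ when $q>r$. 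It is this dichotomy at the primes that decides continuity.

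For existence I would pass to the additive function $f:=\log g\ge0$ and apply the Erd\H{o}s--Wintner theorem (see, e.g., Tenenbaum or Elliott), which guarantees a limiting distribution once the three series $\sum_{|f(p)|>1}p^{-1}$, $\sum_{|f(p)|\le1}f(p)/p$ and $\sum_{|f(p)|\le1}f(p)^2/p$ converge. Here $f(p)=0$ if $q\le r$, and $f(p)=\log(1+p^{r-q})\le p^{r-q}$ if $q>r$; in either case $|f(p)|\le\log2<1$, and since $r-q\le-1$ the relevant sums are dominated by $\sum_p p^{r-q-1}$ and $\sum_p p^{2(r-q)-1}$, which converge. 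The prime-power values $f(p^\nu)=\log(1+p^{r-q\nu})\le\log2$ are uniformly bounded and contribute $\sum_p\sum_{\nu\ge2}p^{-\nu}<\infty$, so they pose no obstruction. Hence for every $q,r\in\N$ the distributions $S_N$ converge weakly to a distribution function $S(q,r;\cdot)$.

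To settle continuity I would invoke the companion statement in the Erd\H{o}s--Wintner circle of results: the limiting distribution is continuous if and only if $\sum_{p\,:\,f(p)\ne0}p^{-1}$ diverges. If $q>r$ then $f(p)=\log(1+p^{r-q})>0$ for every prime, so this series equals $\sum_p p^{-1}=\infty$ and $S$ is continuous. If $q\le r$ then $f(p)=0$ for all $p$, the series vanishes, and $S$ must carry an atom. To confirm the criterion I would exhibit the atom directly: $g(n)=1$ holds exactly when $v_p(n)\le\lfloor r/q\rfloor$ for all $p$, i.e. when $n$ is $k$-free with $k:=\lfloor r/q\rfloor+1\ge2$; such $n$ have density $1/\zeta(k)>0$, while $g(n)<1$ never occurs, so $S(q,r;\cdot)$ jumps by $1/\zeta(k)$ at $\lambda=1$.

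The point requiring care is the continuity half in the degenerate regime $q\le r$, where $f$ vanishes on the primes yet not on higher prime powers. One must check that these bounded, rapidly decaying prime-power contributions cannot by themselves restore continuity. This is the content of the Jessen--Wintner purity law: the limiting law is the infinite convolution of the independent laws assigning mass $(1-1/p)p^{-\nu}$ to the value $f(p^\nu)$, and when $f(p)=0$ the indices $\nu=0$ and $\nu=1$ give the same value, so each factor places mass at least $1-p^{-2}$ at a single point; since $\sum_p p^{-2}<\infty$ the convolution is atomic. The explicit $k$-free computation above provides an independent verification, and the remaining steps (convergence of the three series and the weak-convergence conclusion) are routine once the local factors are in hand.
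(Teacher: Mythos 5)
Your proposal is correct and follows essentially the same route as the paper: pass to the additive function $f=\log\bigl(\sigmar(n^q)/n^q\bigr)$, verify the hypotheses of the Erd\H{o}s--Wintner theorem at the primes to get weak convergence, and then apply the companion Erd\H{o}s--Wintner criterion that the limit law is continuous if and only if $\sum_{f(p)\ne0}1/p$ diverges, which holds precisely when $q>r$. Your extras --- the explicit atom of mass $1/\zeta(\lfloor r/q\rfloor+1)$ at $\lambda=1$ via $k$-free numbers, and the Jessen--Wintner purity remark --- are sound additional verification but not needed beyond what the cited criterion already gives.
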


\proof
Let us fix arbitrary $q$ and $r$ and let
$$ f(n) := \ln {\sigmar(n^q) \over n^q}, $$
here $f$ is an additive function. It is enough to prove that
$$ F_N(\lambda) := {1\over N} \#\{ n\le N \mid f(n)\le\lambda \} $$
converges weakly to some $F(\lambda)$ as $N\to\infty$ and $F$ is continuous if and only if $q>r$.

By definition \eqref{eq:sigmar-def}
$$
\sigmar(p^q) = \begin{cases}
               p^q, & r\ge q, \\
               p^q+p^r, & r<q.
               \end{cases}
$$
So
$$
f(p) = \begin{cases}
       0, & r\ge q, \\
       \ln(1+p^{r-q}) \ll p^{r-q}, & r<q,
       \end{cases}
$$
and $f(p) = |f(p)| \le 1$. Also
$$
\sum_{p} {f(p)\over p} \ll \begin{cases}
       0, & r\ge q, \\
       \sum_p p^{r-q-1} \ll \sum_p p^{-2}, & r<q,
       \end{cases}
\quad < +\infty.
$$
and the same is valid for $\sum_p f^2(p)/p$. Now by Erdős---Wintner theorem~\cite[Th. i]{erdos1939} we get that $F_N(\lambda)$ converges weakly to  $F(\lambda)$ as $N\to\infty$. Taking into account
$$
\sum_{f(p)\ne0} {1\over p} \ll \begin{cases}
       0, & r\ge q, \\
       \infty, & r<q,
       \end{cases}
$$
we see that due to \cite[Th. v]{erdos1939} the distribution $F$ is continuous if and only if $r<q$.
\endproof

\bibliographystyle{ugost2008s}
\bibliography{taue}

\end{document}